\documentclass[a4paper,11pt,leqno]{smfart}

\usepackage{amssymb,amsmath,enumerate,verbatim,mathrsfs,graphics,graphicx,mathptm,float,mathtools,xcolor,pgf,tikz,epsfig,caption}
\usepackage[T1]{fontenc}
\usepackage[latin1]{inputenc}
\usepackage[french,english]{babel}
\usepackage[all]{xy}
\usepackage[pdfstartview=FitH,
            colorlinks=true,
            linkcolor=blue,
            urlcolor=blue,
            citecolor=red,
            bookmarks,
            bookmarksopen=true,
            bookmarksnumbered=true]{hyperref}

\usepackage{cleveref}
\theoremstyle{plain}
\newtheorem{thmsec}{Theorem}[section]
\newtheorem{thm}[thmsec]{Theorem}
\newtheorem{pro}[thmsec]{Proposition}
\newtheorem{lem}[thmsec]{Lemma}
\newtheorem{cor}[thmsec]{Corollary}

\theoremstyle{definition}
\newtheorem{defin}[thmsec]{Definition}

\theoremstyle{remark}
\newtheorem{rem}[thmsec]{Remark}

\newtheorem{eg}[thmsec]{Example}

\def\og{\leavevmode\raise.3ex\hbox{$\scriptscriptstyle\langle\!\langle$~}}
\def\fg{\leavevmode\raise.3ex\hbox{~$\!\scriptscriptstyle\,\rangle\!\rangle$}}

\addtocounter{section}{0}             % Start with section 1
\numberwithin{equation}{section}       % Number formulas within sections

\newcommand{\N}{\mathbb{N}}

\newcommand{\C}{\mathbb{C}}

\newcommand\Sing{\mathrm{Sing}}

\newcommand\F{\mathcal{F}}

\newcommand\W{\mathcal{W}}
\newcommand\G{\mathcal{G}}

\begin{document}
\title[On the holomorphy of the curvature of planar webs]{On the holomorphy of the curvature of planar webs along an invariant curve}
\date{\today}

\author{Samir \textsc{Bedrouni}}

\address{Facult\'e de Math\'ematiques, USTHB, BP $32$, El-Alia, $16111$ Bab-Ezzouar, Alger, Alg\'erie}
\email{sbedrouni@usthb.dz}

\keywords{web, holomorphy of the curvature, invariant curve, discriminant}

\selectlanguage{english}
\maketitle{}

\begin{abstract}
Let $\mathcal{W}=\mathcal{W}_{n}\boxtimes\mathcal{W}_{d-n}$ be a $d$-web on $(\mathbb{C}^2,0)$, where $\mathcal{W}_n$ is an $n$-web with a totally invariant irreducible curve~$C$, and $\mathcal{W}_{d-n}$ is a regular $(d-n)$-web transverse to $C$. We show that the curvature of $\mathcal{W}$ is holomorphic~along~$C$~if~and~only~if the curvature of $\mathcal{W}_n$ is holomorphic along $C$. When $\mathcal{W}_n$ is non-degenerate along~$C$, we prove that~$K(\mathcal{W}_n)$, and hence $K(\mathcal{W})$, is holomorphic along $C.$ We deduce that, if $\mathcal{W}_n$ is irreducible and $\mathrm{mult}\left(\Delta(\mathcal{W}_n),C\right)<3(n-1),$ then $K(\mathcal{W})$ is holomorphic along $C.$ This generalizes a result of \textsc{Mar\'{\i}n} and~\textsc{Pereira}, obtained in the case where $C$ has minimal multiplicity $n-1$ in the discriminant $\Delta(\mathcal{W}_n).$ If~$n$ is prime or $n=4$, the~condition $\mathrm{mult}\left(\Delta(\mathcal{W}_n),C\right)<3(n-1)$ can be weakened to $\mathrm{mult}\left(\Delta(\mathcal{W}_n),C\right)<n(n-1).$ Moreover, we describe a natural decomposition of $\mathcal{W}_n$ as the product of two subwebs $\mathcal{W}_n=\mathcal{W}_{n}^{\rm{str}}\boxtimes\mathcal{W}_{n}^{\rm{wk}}.$ Under the assumption that $\mathcal{W}_{n}^{\rm{wk}}$ is non-degenerate along $C$, we show that the holomorphy of $K(\mathcal{W})$ on $C$ is equivalent to that of $K(\mathcal{W}_{n}^{\rm{str}}).$

\noindent{\it 2010 Mathematics Subject Classification. --- 14C21, 32S65, 53A60.}
\end{abstract}

\section{Introduction}

\noindent The holomorphy of the curvature of planar webs has been studied in~\cite{PP08,MP13,BM24}. In this paper, we consider webs on $(\mathbb{C}^2,0)$ with an irreducible invariant curve and study the holomorphy of their curvature along such a curve. For definitions and notations concerning webs on $(\mathbb{C}^2,0)$, we refer to~\cite{PP15}.

\subsection{Webs}

A {\sl (singular) $d$-web} $\W$ on $(\C^{2},0)$ is defined by a $d$-symmetric $1$-form $\omega \in \mathrm{Sym}^{d}\Omega^{1}(\C^{2},0)$ satisfying the following conditions:
\begin{enumerate}
\item [\textit{(1)}] the singular locus $\Sing\omega=\{p\in(\C^{2},0):\omega(p)=0\}$ consists of isolated points;

\item [\textit{(2)}] for every generic point $p\in(\C^{2},0),$ $\omega(p)$ factors as the product of $d$ pairwise linearly independent $1$-forms.
\end{enumerate}
Two such $d$-symmetric $1$-forms $\omega$ and $\omega'$ define the same web if there exists a unit $u\in\mathcal{O}^*(\C^{2},0)$ such that $\omega'=u\omega.$

\noindent The {\sl discriminant} $\Delta(\W)$ of $\W$ is the divisor defined by $\Delta(\omega)=0$, where $\Delta(\omega)$ is the discriminant of the $d$-symmetric $1$-form~$\omega\in\mathrm{Sym}^{d}\Omega^{1}(\C^2,0)$, \emph{cf.} \cite[Chapter~1, \S 1.3.4]{PP15}. The support of $\Delta(\W)$ consists of the points that do not satisfy condition~\textit{(2)}. When~$d=1$ this condition is always satisfied and we recover the usual definition of a holomorphic foliation $\F$ on $(\C^2,0).$
\smallskip

\noindent We say that $\W$ is {\sl decomposable} if there are webs $\W_1$ and $\W_2$ on $(\C^2,0)$ sharing no common subwebs such that $\W$ is the superposition of $\W_1$ and $\W_2$; we then write $\W=\W_1\boxtimes\W_2.$ Otherwise $\W$ is said to be {\sl irreducible}. It is said to be {\sl completely decomposable} if there exist holomorphic foliations $\F_1,\ldots,\F_d$ on $(\C^2,0)$ such that $\W=\F_1\boxtimes\cdots\boxtimes\F_d$.

\noindent The $d$-web $\W$ is regular if its discriminant $\Delta(\W)$ is empty, or equivalently if it is completely decomposable into $d$ regular holomorphic foliations on $(\C^2,0)$ that are pairwise transverse at $0$.

\noindent Let $\Gamma\subset\Delta(\W)$ be an irreducible component of the discriminant of $\mathcal{W}$. We say that $\Gamma$ is {\sl invariant} (resp. {\sl totally invariant}) by $\W$ if, on the regular part of $\Gamma$, we have $\mathrm{T}\hspace{0.2mm}\Gamma\subset\mathrm{T}\W|_{\Gamma}$ (resp. $\mathrm{T}\hspace{0.2mm}\Gamma=\mathrm{T}\W|_{\Gamma}$). When $\W$ is irreducible, these two notions coincide. For more details on this subject, \emph{see}~\cite{PP15}.

\subsection{Strongly and weakly invariant curves by an irreducible $\nu$-web}\label{subsec:fort-faible-invariance}

Let $\W_\nu$ be an irreducible $\nu$-web on $(\C^2,0)$ admitting an irreducible invariant curve $C$. The irreducibility of $\W_\nu$ implies that its monodromy group is cyclic of order $\nu$. Thus, in a neighborhood $U$ of a generic point of $C$, there exists a cyclic ramified \textsc{Galois} covering $\pi\colon\tilde{U}\to U$ of degree $\nu$ such that the pull-back $\pi^*\W_\nu$ decomposes completely~into~$\nu$~holomorphic foliations on $\tilde U$, and
\smallskip

\begin{itemize}
\item either the curve $\tilde{C}=\pi^{-1}(C)$ is totally invariant by $\pi^*\W_\nu$, in which case we say that $C$ is \textsl{strongly invariant by} $\W_\nu$;
\smallskip

\item or $\pi^*\W_\nu$ is transverse to $\tilde{C}$, in which case we say that $C$ is \textsl{weakly invariant by} $\W_\nu$.
\end{itemize}
\smallskip

\noindent More precisely, let us choose a local coordinate system $(z,w)$ on $U$ such that $C\cap U=\{w=0\}.$ In this system, $\W_\nu$ is defined by a $\nu$-symmetric $1$-form $\omega$ of type
\begin{align}\label{equa:nu-forme-omega}
&\omega=\mathrm{d}w^{\nu}+wa_{\nu-1}(z,w)\mathrm{d}w^{\nu-1}\mathrm{d}z+\cdots
+wa_1(z,w)\mathrm{d}w\mathrm{d}z^{\nu-1}+w^{\kappa}a_0(z,w)\mathrm{d}z^{\nu},
\end{align}
with $\kappa\in\N^*$ and $a_0(z,0)\neq0.$ The ramified covering $\pi:\tilde U\to U$ is given by $(z,w)=\pi(x,y)=(x,y^\nu),$ and $\tilde{C}=\pi^{-1}(C)=\{y=0\}.$ The above description of the $\nu$-web $\pi^*\W_\nu$ is ensured by the following dichotomy, which will be established in~\S\ref{sec:resultats-intermediaires} (Lemma~\ref{lem:multiplicite-Delta-W-nu-C-irreductible}):
\begin{itemize}
\item if $\kappa<\nu,$ then $\pi^*\W_\nu=\boxtimes_{j=1}^{\nu}\F_j$, where each $\F_j$ is a foliation transverse to $\tilde{C}$;

\item if $\kappa\geq\nu,$ then $\pi^*\W_\nu=\boxtimes_{j=1}^{\nu}\F_j$, where each $\F_j$ is a foliation having $\tilde{C}$ as an invariant curve.
\end{itemize}

\noindent The integer $\kappa$ also plays a role in determining the multiplicity of the discriminant $\Delta(\W_{\nu})$ along $C$: for $\nu\geq2$, we have $$\mathrm{mult}\left(\Delta(\W_{\nu}),C\right)\geq\kappa(\nu-1),$$ with equality if and only if $\gcd(\nu,\kappa)=1$. In particular, $\mathrm{mult}\left(\Delta(\W_{\nu}),C\right)$ is minimal, equal to $\nu-1$, if and only if $\kappa=1$ (\emph{see} Lemma~\ref{lem:multiplicite-Delta-W-nu-C-irreductible}).

\begin{rem}\label{rem:fort-faible-invariance}
\begin{itemize}
\item []\hspace{-0.8cm}(i) Saying that $C$ is strongly (resp. weakly) invariant by $\W_\nu$ is equivalent to $\kappa\geq\nu$ (resp. $\kappa<\nu$).

\item [(ii)] In the case of a foliation $\F$, {\it i.e.} when $\nu=1$, every $\F$-invariant curve is strongly invariant by $\F$.

\item [(iii)] For $\nu\geq2$, if $C$ has minimal multiplicity $\nu-1$ in $\Delta(\mathcal W_\nu)$, which is equivalent to $\kappa=1$, then $C$ is always weakly invariant by $\W_\nu.$
\end{itemize}
\end{rem}

\noindent The web $\W_\nu$ being irreducible, the polynomial defining $\W_\nu$ in $\mathbb{C}((z))\{w,\frac{\mathrm{d}w}{\mathrm{d}z}\}$ has the following \textsc{Puiseux} parametrizations:
\begin{align}\label{equa:puiseux}
\frac{\mathrm{d}w}{\mathrm{d}z}=\sum_{l=r}^{\infty}c_{l-r}(z)\left(\zeta^{j}w^{\frac{1}{\nu}}\right)^{l}=\zeta^{jr}c_{0}(z)w^{\frac{r}{\nu}}+\cdots,
\qquad c_{0}(z)\not\equiv 0,\quad\zeta=\exp(\tfrac{2\mathrm{i}\pi}{\nu}),\quad j=1,\ldots,\nu.
\end{align}

\noindent As we will see (\emph{cf.} proof of Lemma~\ref{lem:multiplicite-Delta-W-nu-C-irreductible}), the integer $\kappa$ defined via the $\nu$-symmetric $1$-form $\omega$ coincides with~$r$, that is, with the numerator of the first Puiseux exponent in the parametrizations~(\ref{equa:puiseux}), yielding
\begin{align}\label{equa:puiseux-kappa}
\frac{\mathrm{d}w}{\mathrm{d}z}=\zeta^{j\kappa}c_0(z)w^{\frac{\kappa}{\nu}}+\cdots,\qquad c_{0}(z)\not\equiv 0,\quad j=1,\ldots,\nu.
\end{align}

\noindent Let us consider another system of local coordinates $(z',w')$ such that $C={w'=0}$. We can then write
\[
\hspace{-3.1cm}z=\varphi(z',w'), \qquad w=u(z',w')w',
\]
for some holomorphic functions $\varphi$ and $u$ satisfying
\[
\hspace{-1.3cm}0\neq\left.\det\left(\frac{\partial(z,w)}{\partial(z',w')}\right)\right|_{w'=0}=\partial_{z'}\varphi(z',0)u(z',0).
\]

\noindent In this new coordinate system, the~\textsc{Puiseux}~parametrizations~(\ref{equa:puiseux-kappa}) take the form
\begin{align}\label{equa:puiseux-kappa-nouveau-systeme}
\frac{\mathrm{d}w'}{\mathrm{d}z'}=
\frac{\zeta^{j\kappa}c_0(\varphi(z',0))\partial_{z'}\varphi(z',0)}{u(z',0)^{1-\frac{\kappa}{\nu}}}w'^{\frac{\kappa}{\nu}}
-\frac{\partial_{z'}u(z',0)}{u(z',0)}w'
+\cdots,\qquad\quad j=1,\ldots,\nu.
\end{align}

\noindent When $C$ is weakly invariant by $\W_\nu$, {\it i.e.} if $\kappa<\nu$, we see from~(\ref{equa:puiseux-kappa-nouveau-systeme}) that the first nonzero exponent in the \textsc{Puiseux} series of $\frac{\mathrm{d}w'}{\mathrm{d}z'}$ at $w'=0$ remains equal to $\frac{\kappa}{\nu}$. It follows that the integer $\kappa\in\{1,\ldots,\nu-1\}$ does not depend on the choice of local coordinate system and is therefore intrinsically attached to the pair $(\W_\nu,C)$. We call this integer $\kappa$ the {\sl \textsc{Puiseux} index of $\W_\nu$ relative to $C$} and we denote it by $\mathfrak{i}(\mathcal W_\nu,C).$ The ratio $\frac{\kappa}{\nu}$ is called the {\sl \textsc{Puiseux} exponent of $\W_\nu$ relative to $C$} and is denoted by $\rho(\W_\nu,C).$

\begin{rem}
For a $\nu$-web $\W_\nu$ on $(\C^2,0)$, not necessarily irreducible, admitting a totally invariant irreducible~curve~$C$, we have the decomposition
$$\W_{\nu}=\W_{\nu}^{\rm{str}}\boxtimes\W_{\nu}^{\rm{wk}},$$
where $\W_\nu^{\mathrm{str}}$ (resp. $\W_\nu^{\mathrm{wk}}$) denotes the subweb of $\W_\nu$ consisting of the irreducible subwebs of $\W_\nu$ having $C$ as a strongly (resp. weakly) invariant curve.
\end{rem}

\subsection{Curvature of webs}

Let us briefly recall the definition of the curvature of a $d$-web $\W$ on $(\C^2,0)$~with~$d\geq3.$ First, assume that $d=3$ and that $\W$ is completely decomposable, $\W=\F_1\boxtimes\F_2\boxtimes\F_3.$ For~$i=1,2,3$,~let~$\omega_{i}$ be a $1$-form with an isolated singularity at $0$ defining the foliation~$\mathcal{F}_{i}.$ Without loss of generality,~we~can~assume that the $1$-forms $\omega_i$ satisfy $\omega_1+\omega_2+\omega_3=0.$ It can be shown that there exists a~meromorphic $1$-form $\eta(\W)$, called the {\sl fundamental form of $\W$}, such that $\mathrm{d}\omega_i=\eta(\W)\wedge\omega_i$ for $i=1,2,3.$ This~$1$-form~is~well-defined~up~to addition of a closed logarithmic $1$-form $\dfrac{\mathrm{d}g}{g}$ with $g\in\mathcal{O}^*(\mathbb{C}^{2},0).$ The curvature of $\W$ is then defined as the $2$-form $K(\W)=\mathrm{d}\,\eta(\W).$

\noindent Now, if the $d$-web $\W$ is completely decomposable with $d>3$, $\W=\F_1\boxtimes\cdots\boxtimes\F_d$, we define the curvature $K(\W)$ of $\W$ as the sum of the curvatures of all its $3$-subwebs.

\noindent It is easy to check that $K(\W)$ is a meromorphic $2$-form with poles along the discriminant $\Delta(\W)$ of $\W$, canonically associated to $\W.$

\noindent Finally, if $\W$ is not completely decomposable, then its pull-back by a suitable ramified \textsc{Galois} covering is a completely decomposable web. The invariance of the curvature of this new web by the action of the \textsc{Galois} group allows us to descend it to a meromorphic~$2$-form $K(\W)$ on $(\C^2,0)$, with poles along the discriminant of $\W$ (\emph{see} \cite{MP13}).

\subsection{Non-degenerate webs}

\noindent We introduce the following definition.

\begin{defin}\label{defin:W-nu-non-degenere-le-long-C}
Let $\W_\nu$ be a $\nu$-web on $(\C^2,0)$ admitting a totally invariant irreducible curve $C.$ Let $\W_\nu=\boxtimes_{\alpha=1}^{r}\W_{\nu_{\alpha}}$ be the decomposition of $\W_\nu$ into its irreducible components. We say that $\W_\nu$ is {\sl non-degenerate~along~$C$} if $C$ is weakly invariant by each $\W_{\nu_\alpha}$ and if, denoting $\kappa_\alpha=\mathfrak{i}(\W_{\nu_{\alpha}},C)$ and~$\rho_\alpha=\rho(\W_{\nu_{\alpha}},C)=\frac{\kappa_\alpha}{\nu_\alpha},$ the~following conditions are satisfied:
\begin{itemize}
\item [($\mathfrak{a}$)] for all $\alpha=1,\ldots,r$, $\gcd(\nu_\alpha,\kappa_\alpha)\leq2$;

\item [($\mathfrak{b}$)] for all $\alpha\neq \beta$, if $\rho_\alpha=\rho_\beta$, then $\gcd(\nu_\alpha,\kappa_\alpha)=\gcd(\nu_\beta,\kappa_\beta)=1$;

\item [($\mathfrak{c}$)] there are no three distinct indices $\alpha,\beta,\gamma\in\{1,\ldots,r\}$ such that $\rho_\alpha=\rho_\beta=\rho_\gamma.$
\end{itemize}
\end{defin}

\begin{rem}
Let $\W_\nu$ be an irreducible $\nu$-web on $(\C^2,0)$ having an irreducible invariant curve $C$. Saying that $\W_\nu$ is non-degenerate along $C$ amounts to saying that $C$ is weakly invariant by $\W_\nu$ and that $\gcd(\nu,\kappa)\leq2$, where~$\kappa=\mathfrak{i}(\W_{\nu},C).$
\smallskip

\noindent In particular:
\smallskip

\begin{itemize}
\item [(i)] The web $\W_\nu$ is always non-degenerate along $C$ if $C$ is weakly invariant by $\W_\nu$ with \textsc{Puiseux} index $\kappa\in\{1,2,\nu-1,\nu-2\}.$

\item [(ii)] If $C$ has minimal multiplicity $\nu-1$ in $\Delta(\W_\nu)$, then $\W_\nu$ is non-degenerate along $C$ (\emph{cf.}  Remark~\ref{rem:fort-faible-invariance}~(iii)).

\item [(iii)] When $\nu$ is prime or equal to $4$, $\W_\nu$ is non-degenerate along every curve which is weakly invariant~by~$\W_\nu.$
\end{itemize}
\end{rem}

\begin{eg}\label{eg:tissu-non-degenere}
Let $\W_{\nu_\alpha},\alpha=1,\ldots,r,$ be irreducible $\nu_\alpha$-webs on $(\C^2,0)$, with $\nu_\alpha\geq2$, sharing a common irreducible invariant curve $C$. We assume that:
\begin{itemize}
\item [\texttt{1.}] $\mathrm{mult}\left(\Delta(\W_{\nu_\alpha}),C\right)=\nu_\alpha-1$, for all $\alpha=1,\ldots,r$;

\item [\texttt{2.}] there are no three distinct indices $\alpha,\beta,\gamma\in\{1,\ldots,r\}$ such that $\nu_\alpha=\nu_\beta=\nu_\gamma.$
\end{itemize}
Then the web $\W_\nu=\W_{\nu_1}\boxtimes\cdots\boxtimes\W_{\nu_r}$ is non-degenerate along $C.$
\end{eg}

\noindent The definition of a non-degenerate web along a totally invariant curve is a generalization of the situation considered in \cite[\S~2.5]{MP13}, namely that of irreducible $\nu$-webs $\W_\nu$ admitting an irreducible invariant curve of minimal multiplicity $\nu-1$ in $\Delta(\W_\nu).$ It will play an important role in the subsequent study of the holomorphy of the curvature of certain webs on $(\C^2,0).$

\subsection{Statements of the main results}

The central result of this paper is the following theorem.

\begin{thm}\label{theoreme:W-n-W-d-n}
{\sl Let $\W_n$ be an $n$-web on $(\C^2,0)$ admitting a totally invariant irreducible curve $C$. Let $\W_{d-n}$ be a regular $(d-n)$-web on $(\C^2,0)$ transverse to $C$. Set $\W=\W_{n}\boxtimes\W_{d-n}.$ Then $K(\W)-K(\W_n)$ is holomorphic along $C$. In particular, $K(\W)$ is holomorphic along $C$ if and only if $K(\W_n)$ is holomorphic along~$C.$}
\end{thm}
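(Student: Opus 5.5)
The plan is to reduce, via ramified Galois coverings and the additivity of curvature over $3$-subwebs, to a local computation on completely decomposable webs. Holomorphy along $C$ is a local property at a generic point of $C$, so we work near a generic point $p\in C$. There we choose coordinates $(z,w)$ with $C=\{w=0\}$ and pass to a cyclic ramified Galois covering $\pi(x,y)=(x,y^N)$, where $N$ is the lcm of the orders of the monodromies of the irreducible components of $\W_n$. On $\tilde U$, the web $\pi^*\W_n$ becomes completely decomposable, $\pi^*\W_n=\F_1\boxtimes\cdots\boxtimes\F_n$. The regular web $\W_{d-n}=\G_1\boxtimes\cdots\boxtimes\G_{d-n}$, being regular and transverse to $C$, stays completely decomposable upstairs, with $\pi^*\G_k$ holomorphic foliations defined by $\pi^*\theta_k$, $\theta_k=\mathrm{d}w-g_k(z,w)\mathrm{d}z$ and $g_k$ holomorphic (near generic $p$ we may normalize by $\mathrm{d}w$ because $\G_k$ is transverse to $C$). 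Since the curvature descends by Galois invariance, it suffices to show that $\pi^*K(\W)-\pi^*K(\W_n)$ is holomorphic along $\tilde C=\{y=0\}$, and then descend. Here holomorphy of a Galois-invariant form upstairs implies holomorphy downstairs: such a form is $h(x,y^N)\,\mathrm{d}x\wedge\mathrm{d}(y^N)$ up to holomorphic factors, and a meromorphic invariant form with no pole on $\tilde C$ descends without pole.

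By definition, $K(\W)$ is the sum of $K$ over all $3$-subwebs. The difference $K(\W)-K(\W_n)$ is therefore the sum of the curvatures of the $3$-subwebs involving at least one $\G_k$. These fall into three types:
\begin{itemize}
\item[(a)] $\F_i\boxtimes\F_j\boxtimes\G_k$;
\item[(b)] $\F_i\boxtimes\G_k\boxtimes\G_l$;
\item[(c)] $\G_k\boxtimes\G_l\boxtimes\G_m$.
\end{itemize}
Type (c) is regular, hence holomorphic. For the others we use the explicit formula for the curvature of a $3$-web $\F_1\boxtimes\F_2\boxtimes\F_3$ given by $\mathrm{d}w-p_i\mathrm{d}z$ (the classical Blaschke–Dubourdieu / Pantazi-type formula): $\eta$ is expressed via the $p_i$, their partial derivatives, and denominators $(p_i-p_j)$. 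Upstairs we write the slopes in the $(x,w)$ variables. The $\F_i$ have slopes $p_i=\frac{\mathrm{d}w}{\mathrm{d}z}=\zeta^{\cdot}c_0 w^{\kappa/\nu}+\cdots$ (Puiseux series in $y$, vanishing on $\tilde C$ since $C$ is totally invariant). The $\G_k$ have slopes $g_k$ holomorphic with $g_k(z,0)\neq0$ (transversality).

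The key point is then that in each type (a) or (b) 3-web, a factor $p_i-g_k$ is a unit along $\tilde C$, and only denominators $p_i-p_j$ can produce poles. For type (b), the only possible denominators are $p_i-g_k$, $p_i-g_l$, $g_k-g_l$, all units. Derivatives of the $p_i$ in $y$ are harmless after the change of variables, because everything is a power series in $y$ and the form $\mathrm{d}z\wedge\mathrm{d}w=Ny^{N-1}\mathrm{d}x\wedge\mathrm{d}y$ absorbs the ramification; one must check this bookkeeping by computing in the $(z,w)$-frame with fractional powers rather than in $(x,y)$. Type (a) is the main obstacle, since $p_i-p_j$ vanishes on $\tilde C$. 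My approach would be to sum type (a) over $k$ and use the structure of the formula. For $\F_i\boxtimes\F_j\boxtimes\G$, I would choose $\omega$'s normalized so that $\G$'s form is $\theta$ and express $\eta$ as $\eta_{ij}+(\text{terms with denominator } p_i-p_j \text{ multiplied by quantities vanishing when } p_i=p_j)$. Concretely, I expect the polar part of $K(\F_i\boxtimes\F_j\boxtimes\G)$ to depend on $\G$ only through $g$ evaluated in a way that cancels, i.e.\ to be of the form $\mathrm{d}\bigl(\frac{\text{something holomorphic in }g}{p_i-p_j}\cdot(p_i-p_j)\bigr)$. A cleaner route would be to use a formula of the type $K(\F_i\boxtimes\F_j\boxtimes\G)=K(\F_i\boxtimes\F_j\boxtimes\G')+\mathrm{d}(\text{holomorphic})$ for two transverse foliations $\G,\G'$. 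Failing that, I would expand directly and show that the coefficient of $(p_i-p_j)^{-m}$ vanishes after symmetrization over the Galois orbit, using that $g$ is single-valued (invariant under $y\mapsto\zeta y$).
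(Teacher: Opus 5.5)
Your overall setup is right: pass to a cyclic cover, write $K(\W)-K(\W_n)$ as the sum of curvatures of the $3$-subwebs containing some $\G_k$, and descend by Galois invariance. But your account of where the poles come from is wrong, and the decisive step is missing.

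\textbf{Type (b) is not termwise holomorphic.} This fails when $\F_i$ comes from an irreducible component for which $C$ is weakly invariant ($\kappa<\nu$). Poles do not only come from denominators $p_i-p_j$; they also come from $w$-derivatives of the fractional slopes. Here $\partial_w^2p_i\sim w^{\kappa/\nu-2}$, and $w^{\kappa/\nu-2}\,\mathrm{d}z\wedge\mathrm{d}w=N\,y^{N\kappa/\nu-N-1}\,\mathrm{d}x\wedge\mathrm{d}y$ has a pole because $\kappa<\nu$. So $\mathrm{d}z\wedge\mathrm{d}w$ does not absorb the ramification. (Upstairs, $\pi^*\G_k$ and $\pi^*\G_l$ are tangent along $\tilde C$, so $\tilde C$ lies in the discriminant of this $3$-web.)

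Concretely, take $\mathrm{d}w^2-w\,\mathrm{d}z^2$, and let $\G_k,\G_l$ have first integrals $z+aw$, $z+bw$ with $a\neq b$. The cover $w=y^2$ gives the first integrals $x-2y$, $x+ay^2$, $x+by^2$. The curvature of this $3$-web is $\pm\tfrac12\,y^{-2}\,\mathrm{d}x\wedge\mathrm{d}y+\cdots$. The pole cancels only against the conjugate term built from $x+2y$.

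\textbf{Your ``cleaner route'' for type (a) also fails termwise.} Take $\mathrm{d}w^3-w\,\mathrm{d}z^3$ with $w=y^3$. The two branches $x-a_1y^2$, $x-a_2y^2$ together with $\{\mathrm{d}z=0\}$ form a flat web. Together with $\{z+cw=\mathrm{const}\}$, $c\neq0$, they give curvature with a pole of order $2$. So for weakly invariant components, cancellation across the Galois orbit is unavoidable. You mention it only as a fallback for (a), and without the estimate that makes it work.

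\textbf{The missing idea.} Split the irreducible components of $\W_n$ according to whether $C$ is strongly ($\kappa\geq\nu$) or weakly ($\kappa<\nu$) invariant, and pass to the cover $\pi(x,y)=(x,y^N)$.
\begin{itemize}
\item Strong components give foliations leaving $\tilde C$ invariant. Each $3$-subweb containing one of them and some $\pi^*\G_k$ has holomorphic curvature individually, by Mar\'{\i}n--Pereira's Theorem~1.
\item Weak components give foliations $\mathrm{d}x+y^{n}h\,\mathrm{d}y$ with $0\le n\le N-2$, transverse to $\tilde C$, while $\pi^*\G_k$ is $\mathrm{d}x+y^{N-1}g_k(x,y^N)\,\mathrm{d}y$.
\end{itemize}
Consider every $3$-subweb built from these weak foliations and the $\pi^*\G_k$. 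This includes two branches of one component whose leading coefficients coincide, which happens when $\gcd(\nu,\kappa)>1$. For each such $3$-subweb, the explicit computation of Lemma~\ref{lem:eta-W-3-n-lambda} gives
\[
\eta=\bigl(g(x)y^{-(n_0+1)}+\cdots\bigr)\mathrm{d}x+\bigl(c\,y^{-1}+\cdots\bigr)\mathrm{d}y,
\]
with $n_0+1\le N-1$ and $c\in\C$ constant.

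The sum of these curvatures is a pull-back, so it equals its average under $y\mapsto\zeta y$ with $\zeta^N=1$. Averaging kills every $y^{-m}\mathrm{d}x$ with $1\le m\le N-1$ and leaves the closed form $c\,\mathrm{d}y/y$. Hence the differential of the averaged $\eta$ is holomorphic along $\tilde C$; this is Proposition~\ref{pro:K(W)-K(W-nu)-holomorphe}.

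Two ingredients make this work, and your proposal has neither: the pole-order bound $<N$ on the $\mathrm{d}x$-coefficient, and the constancy of the residue of the $\mathrm{d}y$-coefficient.
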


\noindent For $n=2$, we recover a special case of \cite[Theorem~1]{MP13}: the curvature $K(\W)$ is always holomorphic on $C$.

\noindent In~\cite[Proposition~2.6]{MP13}, the authors consider a $d$-web $\W$ of the form $\W=\W_\nu\boxtimes\W_{d-\nu}$, where $\W_\nu$ is an~irreducible $\nu$-web having an irreducible invariant curve $C$ of minimal multiplicity $\nu-1$ in $\Delta(\W_\nu)$, and $\W_{d-\nu}$~is~a~regular $(d-\nu)$-web transverse to $C$. They prove that the curvature of $\W$ is holomorphic~along~$C$.

\noindent Theorem~\ref{theoreme:W-n-W-d-n} allows us to recover this result: by arguing only on the component $\W_\nu$, while following the arguments of~\cite[Proposition~2.6]{MP13}, we obtain the holomorphy on $C$ of $K(\W_\nu)$, and hence that of $K(\W)$. The~web~$\W_\nu$ being a particular case of a non-degenerate web along $C$, the following proposition extends the holomorphy result of $K(\W_\nu)$ to the general framework of non-degenerate webs along~$C$.

\begin{pro}\label{pro:W-nu-non-degenere-le-long-C}
{\sl Let $\W_\nu$ be a $\nu$-web on $(\C^2,0)$ having a totally invariant irreducible curve $C$. Assume that $\W_\nu$ is non-degenerate along $C$. Then the curvature of $\W_\nu$ is holomorphic along $C.$}
\end{pro}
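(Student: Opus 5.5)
Since holomorphy along $C$ is a local property at generic points of $C$ (a meromorphic $2$-form whose polar divisor does not contain $C$ has no pole along $C$), I will work near a generic point $p\in C$, in coordinates $(z,w)$ with $C=\{w=0\}$. Write $\W_\nu=\boxtimes_{\alpha=1}^{r}\W_{\nu_\alpha}$. For each $\alpha$ take the normal form~(\ref{equa:nu-forme-omega}) and the Puiseux parametrizations~(\ref{equa:puiseux-kappa}). Passing to the covering $\pi(x,y)=(x,y^N)$ with $N=\mathrm{lcm}(\nu_\alpha)$, the web $\pi^*\W_\nu$ becomes completely decomposable into foliations
\[
\F_{\alpha,j}:\ \mathrm{d}w-p_{\alpha,j}\,\mathrm{d}z=0,\qquad p_{\alpha,j}=\zeta_\alpha^{j\kappa_\alpha}c_{0,\alpha}(z)w^{\rho_\alpha}+\cdots,
\]
with $\zeta_\alpha=\exp(2\mathrm{i}\pi/\nu_\alpha)$. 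Since every $\rho_\alpha<1$ (weak invariance), these foliations are all transverse to $\tilde C$ upstairs. By definition, $K(\W_\nu)$ descends from $\sum K(\F_a\boxtimes\F_b\boxtimes\F_c)$, the sum over all triples of these foliations. The plan is to expand this sum in fractional powers of $w$ and show that it has no term of negative total order in $w$.

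The first step is a formula for the curvature of a $3$-web $\mathrm{d}w-p_i\,\mathrm{d}z$, $i=1,2,3$, in terms of the slopes. The standard formula gives $K=\partial_z\partial_w$ of a rational expression in the $p_i$ and their first derivatives, with denominators built from $(p_i-p_j)$. I will use it in the form
\[
K=\mathrm{d}\Bigl(\sum_{\text{cyc}}\frac{(\text{first derivatives of }p)}{(p_1-p_2)(p_1-p_3)}\,(\cdots)\Bigr),
\]
and read off the orders in $w$. Because the Puiseux expansions differ at leading order, one has $\mathrm{ord}_w(p_a-p_b)=\min(\rho_a,\rho_b)$, \emph{except} when $\rho_a=\rho_b$ and the leading coefficients coincide. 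Leading coefficients coincide when the roots of unity $\zeta^{j\kappa}$ agree. This happens within a single $\W_{\nu_\alpha}$ exactly when $\gcd(\nu_\alpha,\kappa_\alpha)>1$, and across two components with $\rho_\alpha=\rho_\beta$ when the corresponding sets of roots of unity overlap. The conditions ($\mathfrak{a}$), ($\mathfrak{b}$), ($\mathfrak{c}$) are designed exactly to control these coincidences. With $\gcd\le2$, at most two branches share a leading term, and those two differ at the next order, which is controlled by the next nonzero Puiseux term. No three branches can share a leading coefficient.

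The second step is to do the estimate. In each triple, a generic term of the form $\partial_w(\text{slope-derivative}/(p_a-p_b))$ has order about $\rho-1-\min\rho\ge-1$. A naive bound therefore gives simple poles, and it fails to give holomorphy. The key is then Galois averaging. I sum over the action $j\mapsto j+1$ of each cyclic monodromy group, i.e.\ over $y\mapsto \epsilon y$ with $\epsilon^N=1$. Only the monomials $w^{m}$ with $m\in\mathbb{Z}$ survive this sum, so the fractional polar terms cancel. The remaining task is to show that the coefficient of $w^{-1}\mathrm{d}z\wedge\mathrm{d}w$ (and of any lower integer power) vanishes. This should follow from explicit root-of-unity sums such as $\sum_j \zeta^{jk}=0$ for $\nu\nmid k$, as in the minimal multiplicity case of \cite[Proposition~2.6]{MP13}, which I will follow.

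The main obstacle is the degenerate pairs: a pair of branches with equal leading terms, coming either from $\gcd=2$ inside one component or from two components with the same $\rho$ and $\gcd=1$ via ($\mathfrak{b}$). For such a pair, $p_a-p_b$ has a larger order $\rho'>\rho$, which produces a potentially worse pole. My approach is to treat these pairs separately. I will write the third branch's contribution explicitly and show that, after symmetrizing over the pair (the swap is induced by monodromy), the worst terms cancel. The hypotheses ($\mathfrak{a}$) and ($\mathfrak{c}$) then guarantee that no triple contains three mutually tangent branches, where I expect genuine poles to occur. Finally, since the result is stated for non-irreducible $\W_\nu$, I will need to organize the triples by the number of components involved (one, two or three) and handle each type by the same averaging argument.
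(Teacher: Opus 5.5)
Your setup is fine up to the averaging step: invariance under $y\mapsto\epsilon y$ does leave only integer powers of $w$ in the coefficient of $\mathrm{d}z\wedge\mathrm{d}w$. The decisive step is missing, though, and the mechanism you propose for it cannot work. The term $w^{-1}\,\mathrm{d}z\wedge\mathrm{d}w$ (that is, $y^{-1}\,\mathrm{d}x\wedge\mathrm{d}y$ upstairs) is itself invariant under the deck group, so no root-of-unity sum can kill it; such sums only remove fractional powers.

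What is needed, and what the paper proves with Lemma~\ref{lem:eta-W-3-n-lambda}, is a structural statement for each triple, written upstairs in the normal form $\mathrm{d}x+y^{n_i}h_i\,\mathrm{d}y$. Its fundamental form is $\eta=\bigl(g(x)y^{-(n_0+1)}+\cdots\bigr)\mathrm{d}x+\bigl(c\,y^{-1}+\cdots\bigr)\mathrm{d}y$, with $n_0+1\le N-1$ and $c\in\C$ a \emph{constant}. Since $\mathrm{d}\eta=(\partial_xB-\partial_yA)\,\mathrm{d}x\wedge\mathrm{d}y$ and the $y^{0}$-term of $A$ contributes nothing at order $y^{-1}$, the $y^{-1}$-coefficient of $K$ is exactly $\partial_x c$. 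The pole-order bound on $A$ then lets the averaging remove all of its polar part.

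Your ``naive'' estimate is also wrong. Take a triple in which one branch has strictly larger Puiseux exponent $\rho_{\max}$ than the other two. The $\mathrm{d}z\wedge\mathrm{d}w$-coefficient of $K$ then has a nonzero term of order $\rho_{\max}-2<-1$ in $w$. So what you actually need, including for tangent pairs, is a bound strictly better than $w^{-2}$, and that again comes from the lemma.

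The non-degeneracy hypotheses enter through the constancy of $c$, not only through excluding three mutually tangent branches. Suppose two branches are tangent, $h_2=h_1+y^ku$, and the third has $n_3\le n_1=n_2$. The residue is then $n_3+k-k\,h_3(x,0)/h_{31}(x,0)$. This is constant when $n_3<n_1$; when $n_3=n_1$ it is constant exactly when $h_3(x,0)=\lambda h_1(x,0)$ with a constant $\lambda\neq1$. Conditions ($\mathfrak{a}$), ($\mathfrak{b}$), ($\mathfrak{c}$) force a third branch of the same order to lie in the same irreducible component as one of the tangent branches, so its leading coefficient differs from theirs by a root of unity $\lambda\neq1$.

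Example~\ref{eg:condition-b} contradicts your expectation that genuine poles come only from three mutually tangent branches. There only ($\mathfrak{b}$) fails, and the only tangencies are the pairs $\{1,3\}$ and $\{2,4\}$ inside $\W_2$. Yet for a branch of $\W_1$ the ratio $h_3(x,0)/h_1(x,0)=\pm(x+1)^{-1/2}$ is not constant, and $K(\pi^*\W)$ has polar part $-4/(x^2y)\,\mathrm{d}x\wedge\mathrm{d}y$.

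Symmetrizing over the pair does not help either. The curvature of a $3$-web is already symmetric in its three foliations. In that example the deck transformation $y\mapsto -y$, which swaps each tangent pair, fixes both branches of $\W_1$, so it maps the bad triple to itself. As sketched, your argument never uses ($\mathfrak{b}$) in an essential way, so it would also ``prove'' holomorphy in that example.
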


\begin{rem}
The holomorphy of the curvature of $\W_\nu$ along $C$ is not ensured by merely requiring that $C$ be weakly invariant by the irreducible subwebs constituting $\W_\nu$. Indeed, the non-degeneracy of $\W_\nu$ along~$C$, and in particular each of the conditions ($\mathfrak{a}$), ($\mathfrak{b}$) and ($\mathfrak{c}$), is essential for the validity of Proposition~\ref{pro:W-nu-non-degenere-le-long-C}, as shown by Examples~\ref{eg:condition-a}, \ref{eg:condition-b} and \ref{eg:condition-c} in~Section~\S\ref{sec:Exemples}.
\end{rem}

\noindent From Theorem~\ref{theoreme:W-n-W-d-n} and Proposition~\ref{pro:W-nu-non-degenere-le-long-C}, we obtain a generalization of Proposition~2.6 of \cite{MP13} by replacing the~minimal~multiplicity assumption $\mathrm{mult}\left(\Delta(\W_{\nu}),C\right)=\nu-1$ with the condition $\mathrm{mult}\left(\Delta(\W_{\nu}),C\right)<3(\nu-1).$

\begin{cor}\label{cor:W-nu-W-tr-reg}
{\sl Let $\W_\nu$ be an irreducible $\nu$-web on $(\C^2,0)$, with $\nu\geq 2$. Assume that $\Delta(\W_\nu)$ admits an irreducible component $C$ invariant by $\W_\nu$ and such that $\mathrm{mult}\left(\Delta(\W_{\nu}),C\right)<3(\nu-1).$ Let $\W_{d-\nu}$ be a regular $(d-\nu)$-web on $(\C^2,0)$ transverse to $C$. Then the curvature of the $d$-web $\W=\W_{\nu}\boxtimes\W_{d-\nu}$ is holomorphic along $C$.}
\end{cor}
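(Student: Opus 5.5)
The plan is to reduce the corollary to Theorem~\ref{theoreme:W-n-W-d-n} and Proposition~\ref{pro:W-nu-non-degenere-le-long-C}. Since $\W_\nu$ is irreducible, the notions of invariant and totally invariant coincide, so $C$ is totally invariant by $\W_\nu$. Theorem~\ref{theoreme:W-n-W-d-n}, applied with $n=\nu$, then says that $K(\W)$ is holomorphic along $C$ if and only if $K(\W_\nu)$ is. By Proposition~\ref{pro:W-nu-non-degenere-le-long-C}, it therefore suffices to show that $\W_\nu$ is non-degenerate along $C$.

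For an irreducible web, the remark following Definition~\ref{defin:W-nu-non-degenere-le-long-C} reduces non-degeneracy to two conditions: $C$ is weakly invariant, and $\gcd(\nu,\kappa)\leq2$ with $\kappa$ the index from the normal form~(\ref{equa:nu-forme-omega}). I will extract both from the multiplicity bound via the inequality $\mathrm{mult}(\Delta(\W_\nu),C)\geq\kappa(\nu-1)$ of Lemma~\ref{lem:multiplicite-Delta-W-nu-C-irreductible}.

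Combining this inequality with the hypothesis gives
\[
\kappa(\nu-1)\leq\mathrm{mult}(\Delta(\W_\nu),C)<3(\nu-1),
\]
so $\kappa\leq2$ since $\nu\geq2$.

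We now check the two conditions.
\begin{itemize}
\item \emph{Weak invariance.} If $\nu\geq3$, then $\kappa\leq2<\nu$. If $\nu=2$, the bound reads $\mathrm{mult}<3$; we must exclude $\kappa=2$.
\item \emph{The gcd condition.} It is automatic once $\kappa\leq2$, because $\gcd(\nu,\kappa)\leq\kappa\leq2$.
\end{itemize}

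The only delicate point is the case $\nu=2$, $\kappa=2$. Here $\gcd(\nu,\kappa)=2\neq1$, so the equality case of Lemma~\ref{lem:multiplicite-Delta-W-nu-C-irreductible} gives a strict inequality $\mathrm{mult}>\kappa(\nu-1)=2$. Hence $\mathrm{mult}\geq3=3(\nu-1)$, contradicting the hypothesis. So $\kappa<\nu$ in all cases, and $C$ is weakly invariant.

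Alternatively, for $\nu=2$ the remark after Theorem~\ref{theoreme:W-n-W-d-n} already gives holomorphy directly.

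Therefore $\W_\nu$ is non-degenerate along $C$. Proposition~\ref{pro:W-nu-non-degenere-le-long-C} gives $K(\W_\nu)$ holomorphic along $C$, and Theorem~\ref{theoreme:W-n-W-d-n} transfers this to $K(\W)$.

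I expect no serious obstacle. The only care needed is the boundary case $\nu=2$, where the strict equality criterion of Lemma~\ref{lem:multiplicite-Delta-W-nu-C-irreductible} is essential.
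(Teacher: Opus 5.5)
Your proposal is correct and follows the paper's proof for $\nu\geq3$. The paper also uses $\kappa(\nu-1)\leq\mathrm{mult}(\Delta(\W_\nu),C)<3(\nu-1)$ to get $\kappa\in\{1,2\}$, deduces weak invariance and $\gcd(\nu,\kappa)\leq2$, and then applies Proposition~\ref{pro:W-nu-non-degenere-le-long-C} and Theorem~\ref{theoreme:W-n-W-d-n}. The only difference is at $\nu=2$: the paper simply cites \cite[Theorem~1]{MP13} (your ``alternative''), whereas your main argument uses the strict-inequality case of Lemma~\ref{lem:multiplicite-Delta-W-nu-C-irreductible} to force $\kappa=1$. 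That argument is valid but unnecessary, since a $2$-web has no $3$-subwebs and the multiplicity hypothesis plays no role when $\nu=2$.
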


\noindent When $\nu$ is prime or $\nu=4$, we can weaken the condition on the multiplicity of $C$ in $\Delta(\W_{\nu})$ by replacing the~bound~$3(\nu-1)$ with the bound $\nu(\nu-1).$

\begin{cor}\label{cor:W-nu-W-tr-reg-nu-premier-4}
{\sl Let $\nu$ be a prime number or $\nu=4$. Let $\W_\nu$ be an irreducible $\nu$-web on $(\C^{2},0)$. Assume that $\Delta(\W_\nu)$ has an irreducible component $C$ invariant by $\W_\nu$ and such that $\mathrm{mult}\left(\Delta(\W_{\nu}),C\right)<\nu(\nu-1).$ Let $\W_{d-\nu}$ be a regular $(d-\nu)$-web on $(\C^2,0)$ transverse to $C$. Then the curvature of the $d$-web $\W=\W_{\nu}\boxtimes\W_{d-\nu}$ is holomorphic along $C$.}
\end{cor}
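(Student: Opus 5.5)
The plan is to reduce Corollary~\ref{cor:W-nu-W-tr-reg-nu-premier-4} to Theorem~\ref{theoreme:W-n-W-d-n} and Proposition~\ref{pro:W-nu-non-degenere-le-long-C}, in the same way as Corollary~\ref{cor:W-nu-W-tr-reg}. Since $\W_\nu$ is irreducible, the invariant component $C$ is totally invariant. Theorem~\ref{theoreme:W-n-W-d-n}, applied with $n=\nu$, shows that $K(\W)$ is holomorphic along $C$ if and only if $K(\W_\nu)$ is. By Proposition~\ref{pro:W-nu-non-degenere-le-long-C}, it is therefore enough to prove that $\W_\nu$ is non-degenerate along $C$.

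For an irreducible web, the remark following Definition~\ref{defin:W-nu-non-degenere-le-long-C} says that non-degeneracy along $C$ amounts to two conditions. The first is that $C$ is weakly invariant by $\W_\nu$, that is, $\kappa<\nu$. The second is that $\gcd(\nu,\kappa)\le 2$, where $\kappa$ is the integer in the normal form (\ref{equa:nu-forme-omega}).

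We first obtain weak invariance from the multiplicity bound. Lemma~\ref{lem:multiplicite-Delta-W-nu-C-irreductible}, as announced in the introduction, gives
$$\mathrm{mult}\left(\Delta(\W_\nu),C\right)\ge\kappa(\nu-1).$$
Combined with the hypothesis $\mathrm{mult}\left(\Delta(\W_\nu),C\right)<\nu(\nu-1)$, this yields $\kappa(\nu-1)<\nu(\nu-1)$. Since $\nu\ge2$, we get $\kappa<\nu$, so $C$ is weakly invariant and $\kappa=\mathfrak{i}(\W_\nu,C)\in\{1,\ldots,\nu-1\}$.

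It remains to check the arithmetic condition $\gcd(\nu,\kappa)\le 2$, which is item (iii) of the same remark.
\begin{itemize}
\item If $\nu$ is prime, every $\kappa\in\{1,\ldots,\nu-1\}$ satisfies $\gcd(\nu,\kappa)=1$.
\item If $\nu=4$, then $\kappa\in\{1,2,3\}$, so $\gcd(4,\kappa)\in\{1,2\}$.
\end{itemize}
In both cases condition ($\mathfrak{a}$) holds. Conditions ($\mathfrak{b}$) and ($\mathfrak{c}$) are vacuous because $\W_\nu$ has a single irreducible component. Hence $\W_\nu$ is non-degenerate along $C$, and the corollary follows.

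No step is a real obstacle, since all the substance sits in Theorem~\ref{theoreme:W-n-W-d-n}, Proposition~\ref{pro:W-nu-non-degenere-le-long-C} and the multiplicity inequality of Lemma~\ref{lem:multiplicite-Delta-W-nu-C-irreductible}. The one point to state carefully is that, for an irreducible web, invariance of $C$ is the same as total invariance, so that the hypotheses of Theorem~\ref{theoreme:W-n-W-d-n} are met.
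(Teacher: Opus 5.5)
Your proposal is correct and follows the paper's own proof essentially step for step. You use Lemma~\ref{lem:multiplicite-Delta-W-nu-C-irreductible} to get $\kappa(\nu-1)\le\mathrm{mult}\left(\Delta(\W_\nu),C\right)<\nu(\nu-1)$, hence $\kappa<\nu$; then $\nu$ prime or $\nu=4$ gives $\gcd(\nu,\kappa)\le 2$, so $\W_\nu$ is non-degenerate along $C$, and you conclude with Proposition~\ref{pro:W-nu-non-degenere-le-long-C} and Theorem~\ref{theoreme:W-n-W-d-n}.
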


\noindent In the setting of Theorem~\ref{theoreme:W-n-W-d-n}, we can ask if it is possible to reduce the study of the holomorphy of $K(\W)$ along $C$ to that of $K(\W_0)$ for some subweb $\W_0\subsetneq\W_n$. The following theorem gives a partial answer to this question.

\begin{thm}\label{thm:W-n-fort}
{\sl
Let $\W_n$ be an $n$-web on $(\C^2,0)$ having a totally invariant irreducible curve $C$. Assume that $\W_{n}^{\rm{wk}}$ is non-degenerate along $C$. Let $\W_{d-n}$ be a regular $(d-n)$-web on $(\C^2,0)$ transverse to $C$, and set $\W=\W_n\boxtimes\W_{d-n}.$ Then $K(\W)-K(\W_{n}^{\rm{str}})$ is holomorphic along $C$. In particular, $K(\W)$ is holomorphic along $C$ if and only if $K(\W_{n}^{\rm{str}})$ is holomorphic along $C$.}
\end{thm}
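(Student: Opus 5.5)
By Theorem~\ref{theoreme:W-n-W-d-n}, $K(\W)-K(\W_n)$ is holomorphic along $C$. It therefore suffices to show that $K(\W_n)-K(\W_n^{\rm str})$ is holomorphic along $C$, where $\W_n=\W_n^{\rm str}\boxtimes\W_n^{\rm wk}$.

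\emph{Step 1: splitting the curvature.} After a suitable ramified Galois covering, $\W_n$ splits into foliations $\F_1,\dots,\F_n$, and $K(\W_n)$ is the sum of the curvatures of its $3$-subwebs. I would sort the triples $\{i,j,k\}$ by how many of their foliations come from $\W_n^{\rm str}$, writing
\[
K(\W_n)=K(\W_n^{\rm str})+K(\W_n^{\rm wk})+\Sigma_{2,1}+\Sigma_{1,2}.
\]
Here $\Sigma_{2,1}$ (resp.\ $\Sigma_{1,2}$) is the sum over triples with two strong and one weak foliation (resp.\ one strong and two weak). Proposition~\ref{pro:W-nu-non-degenere-le-long-C} applies to $\W_n^{\rm wk}$, which is non-degenerate along $C$ by assumption, so $K(\W_n^{\rm wk})$ is holomorphic along $C$. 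The problem thus reduces to the mixed terms $\Sigma_{2,1}$ and $\Sigma_{1,2}$.

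\emph{Step 2: local model and Puiseux expansions.} Near a generic point of $C$, I take coordinates $(z,w)$ with $C=\{w=0\}$ and write each weak component through its Puiseux expansion (\ref{equa:puiseux-kappa}):
\[
\mathrm{d}w/\mathrm{d}z=\zeta^{j\kappa_\alpha}c_{0,\alpha}(z)\,w^{\rho_\alpha}+\cdots,\qquad \rho_\alpha<1.
\]
For a strong component, $\kappa\ge\nu$, so its leaves have slope $O(w)$; to leading order a strong foliation looks like $\mathrm{d}w-w(\cdots)\mathrm{d}z$.

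\emph{Step 3: the mixed triples.} For each mixed triple I would use the explicit formula for the curvature of a 3-web $\mathrm{d}y-p_i\,\mathrm{d}x$ (as in \cite{MP13}): $K=\partial_x\partial_y$ of a rational expression in the $p_i$ and their derivatives, with denominators $(p_i-p_j)$. On $\tilde U$ with $w=y^N$, $N=\mathrm{lcm}(\nu_\alpha)$, every weak slope is of order $y^{N\rho}$ with $\rho<1$, while every strong slope is of order $y^N$. So in every difference $p_{\rm weak}-p_{\rm str}$ the weak term dominates, and the strong foliations enter only as perturbations, much like the regular transverse web in Theorem~\ref{theoreme:W-n-W-d-n}. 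I would then mimic the proof of Theorem~\ref{theoreme:W-n-W-d-n}, playing the role of $\W_{d-n}$ by the strong foliations, now expressed in the transversal coordinate on the cover. The goal is to show that, after summing over Galois orbits and using the relations $\sum_j\zeta^{jm}=0$ for $\nu\nmid m$, the polar parts of $\Sigma_{2,1}$ and $\Sigma_{1,2}$ along $y=0$ cancel. Alternatively, one could write each such term as the curvature of a 3-web in which only the leading Puiseux term of each foliation matters, and check holomorphy term by term.

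\emph{Main obstacle.} I expect the hardest part to be $\Sigma_{1,2}$. Two weak foliations with the same exponent $\rho_\alpha=\rho_\beta$ produce a small denominator, and one must check that the resulting poles still cancel. Conditions ($\mathfrak{b}$) and ($\mathfrak{c}$) are exactly the tools for this, exactly as in the proof of Proposition~\ref{pro:W-nu-non-degenere-le-long-C}. The strong foliation should contribute at higher order $y^N$, so that it does not disturb that cancellation. My plan is to reuse the estimates from that proof, treating the strong foliation as an extra transverse-like leaf.
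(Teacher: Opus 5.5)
Your reduction is correct and follows the paper's bookkeeping. Theorem~\ref{theoreme:W-n-W-d-n} disposes of $K(\W)-K(\W_n)$. On the cover, $K(\W_n)-K(\W_n^{\mathrm{str}})$ is $K(\W_n^{\mathrm{wk}})$, holomorphic by Proposition~\ref{pro:W-nu-non-degenere-le-long-C}, plus the mixed sums $\Sigma_{2,1},\Sigma_{1,2}$. (The paper groups $\W_n^{\mathrm{wk}}$ with $\W_{d-n}$ instead, which is immaterial.)

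The gap is Step~3. It carries all the remaining content, and you only state it as a goal. The missing idea is that every mixed $3$-subweb has holomorphic curvature along $\tilde C$ \emph{individually}, with no Galois averaging and no use of ($\mathfrak b$), ($\mathfrak c$). This is \cite[Theorem~1]{MP13}, packaged in the paper as Lemma~\ref{lem:K-W-n-W-d-n}:
\begin{itemize}
\item with two strong leaves and one weak leaf, the $2$-web of the strong leaves leaves $\tilde C$ invariant;
\item with one strong and two weak leaves, the barycentric criterion of \cite{MP13} is met, because the remaining leaf is the strong one, which leaves $\tilde C$ invariant.
\end{itemize}
With this, applying Lemma~\ref{lem:K-W-n-W-d-n} on the cover to $\pi^*\W_n^{\mathrm{str}}\boxtimes\pi^*\W_n^{\mathrm{wk}}$ finishes your argument.

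The mechanism you sketch does not apply. Lemma~\ref{lem:eta-W-3-n-lambda} and the averaging estimates in Propositions~\ref{pro:K(W)-K(W-nu)-holomorphe} and~\ref{pro:W-nu-non-degenere-le-long-C} only treat leaves $\mathrm{d}x+y^{n_i}h_i\,\mathrm{d}y$ with $n_i\ge0$, i.e.\ leaves transverse to $\tilde C$. On the cover a strong leaf is $\mathrm{d}y+y^mg\,\mathrm{d}x$ with $m\ge1$. It is tangent to $\tilde C$, so it is neither a ``transverse-like leaf'' nor a substitute for $\W_{d-n}$.

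Your leading-term alternative fails exactly in the small-denominator case you flag. Two weak leaves can share their leading term even when $\W_n^{\mathrm{wk}}$ is non-degenerate: two leaves of one component with $\gcd(\nu_\alpha,\kappa_\alpha)=2$, or leaves of two components with equal $\rho$. So ($\mathfrak b$), ($\mathfrak c$) cannot be what controls $\Sigma_{1,2}$.

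What actually happens is that the strong leaf absorbs the small denominator. Take weak leaves $\omega_i=\mathrm{d}x+q_i\,\mathrm{d}y$ ($i=1,2$) and $\omega_3=p\,\mathrm{d}x+\mathrm{d}y$ with $p=y^mg$. Normalize by $(1-pq_2)\omega_1+(pq_1-1)\omega_2+(q_2-q_1)\omega_3=0$. A direct computation gives $\eta=\partial_x\log(q_2-q_1)\,\mathrm{d}x+(\text{holomorphic})$. The term $\partial_x\log(q_2-q_1)$ has no pole at generic points of $\{y=0\}$, and the factor $y^m$ compensates the order lost when $\partial_y$ hits $q_2-q_1$. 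This holds however high the tangency between the two weak leaves. Hence the non-degeneracy hypothesis enters only through $K(\W_n^{\mathrm{wk}})$.
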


\begin{rem}
Theorem~\ref{thm:W-n-fort} applies in particular when $\W_n$ is of the form $\W_n=\W_{n-\nu}\boxtimes\W_{\nu}$, where $\W_\nu=\W_{\nu_1}\boxtimes\cdots\boxtimes\W_{\nu_r}$ is the $\nu$-web from Example~\ref{eg:tissu-non-degenere}, and $\W_{n-\nu}=\F_1\boxtimes\cdots\boxtimes\F_{n-\nu}$ is a completely decomposable~$(n-\nu)$-web leaving $C$ totally invariant. In this case, we have $\W_{n}^{\rm{str}}=\W_{n-\nu}$,\, $\W_{n}^{\rm{wk}}=\W_{\nu}$, and the curvature of the $d$-web $\W=\W_{n-\nu}\boxtimes\W_\nu\boxtimes\W_{d-n-\nu}$ is holomorphic on $C$ if and only if the curvature of the $(n-\nu)$-web $\W_{n-\nu}$ is holomorphic on $C$. For $r=1$, we recover Proposition~6.2~of~\cite{Bed24arxiv}.
\end{rem}

\section{Preliminary results}\label{sec:resultats-intermediaires}

\noindent In the following lemma, $\W_\nu$ denotes the irreducible $\nu$-web of~\S\ref{subsec:fort-faible-invariance}, defined in local coordinates $(z,w)$ by the $\nu$-symmetric $1$-form~(\ref{equa:nu-forme-omega}) and admitting $C=\{w=0\}$ as an invariant curve. This lemma establishes the facts mentioned in~\S\ref{subsec:fort-faible-invariance} concerning the multiplicity of $C$ in $\Delta(\W_\nu)$ as well as the local description of the web~$\pi^*\W_\nu$ near $\tilde{C}=\pi^{-1}(C),$ where $\pi\hspace{1mm}\colon(x,y)\mapsto(x,y^\nu).$

\begin{lem}\label{lem:multiplicite-Delta-W-nu-C-irreductible}
{\sl

\textit{\textbf{1.}} For $\nu\geq2$, we have $\mathrm{mult}\left(\Delta(\W_{\nu}),C\right)\geq\kappa(\nu-1),$ and equality holds if and only if $\gcd(\nu,\kappa)=1.$ In particular, $\mathrm{mult}\left(\Delta(\W_{\nu}),C\right)=\nu-1$ if and only if $\kappa=1.$
\smallskip

\noindent\textit{\textbf{2.}} We have the following alternative:
\smallskip

\begin{itemize}
\item If $\kappa<\nu,$ then $\pi^*\W_\nu=\boxtimes_{j=1}^{\nu}\F_j$, where $\F_j$ is a foliation transverse to $\tilde{C}$ and defined by a $1$-form $\omega_j$ of type
\[
\omega_j=\mathrm{d}x+\zeta^{-j\kappa}y^{\nu-\kappa-1}f(x,\zeta^jy)\mathrm{d}y,
\qquad\qquad f(x,0)\not\equiv0,\hspace{2mm}\zeta=\exp(\tfrac{2\mathrm{i}\pi}{\nu}).
\]

\item If $\kappa\geq\nu,$ then $\pi^*\W_\nu=\boxtimes_{j=1}^{\nu}\F_j$, where $\F_j$ is a foliation admitting $\tilde{C}$ as an invariant curve and given by a $1$-form $\theta_j$ of type
\[
\hspace{-2cm}\theta_j=\mathrm{d}y+\zeta^{j\kappa}y^{\kappa-\nu+1}h(x,\zeta^jy)\mathrm{d}x,
\qquad\qquad h(x,0)\not\equiv0.
\]
\end{itemize}
}
\end{lem}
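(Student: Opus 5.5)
Our plan is to pull $\omega$ back by $\pi(x,y)=(x,y^\nu)$ and factor it explicitly. Set $p=\frac{\mathrm{d}w}{\mathrm{d}z}$. Dividing (\ref{equa:nu-forme-omega}) by $\mathrm{d}z^\nu$, the web is given by the polynomial
\[
P(p)=p^\nu+wa_{\nu-1}p^{\nu-1}+\cdots+wa_1p+w^\kappa a_0 .
\]
Irreducibility of $\W_\nu$ makes $P$ irreducible over $\C\{z\}[[w]]$ in the local sense, and its roots are the Puiseux series (\ref{equa:puiseux}).

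\textbf{Step 1: $r=\kappa$.} We use the Newton polygon of $P$ in the variables $(w,p)$. The vertices are $(0,\nu)$, coming from $p^\nu$, and $(\kappa,0)$, coming from $w^\kappa a_0$ with $a_0(z,0)\neq0$. Every other monomial $wa_ip^i$ has $w$-order at least $1$. Irreducibility of $P$ forces the Newton polygon to be a single edge, since otherwise the roots would split into Galois-stable groups of different valuations. Hence all roots have valuation $\kappa/\nu$ in $w$. Comparing with (\ref{equa:puiseux}), whose roots have valuation $r/\nu$, gives $r=\kappa$. When $\gcd(\nu,\kappa)=g>1$, the single-edge condition still holds, but the first Puiseux coefficients no longer separate the roots; the roots differ only at higher order.

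\textbf{Step 2: the factorization (part 2).} Write the roots as $p_j=\phi(x,\zeta^jy)$ with $\phi(x,y)=y^\kappa h(x,y)$, $h(x,0)=c_0\not\equiv0$, and $w=y^\nu$. The foliation $\F_j$ on $\tilde U$ is then given by $\mathrm{d}w-p_j\,\mathrm{d}z$, that is,
\[
\nu y^{\nu-1}\mathrm{d}y-y^{\kappa}\zeta^{j\kappa}h(x,\zeta^jy)\,\mathrm{d}x .
\]
\begin{itemize}
\item If $\kappa\geq\nu$, divide by $\nu y^{\nu-1}$. This gives $\theta_j$ with the factor $y^{\kappa-\nu+1}$, after absorbing $\nu$ into $h$.
\item If $\kappa<\nu$, divide by $-\zeta^{j\kappa}y^\kappa h(x,\zeta^jy)$ instead. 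This gives $\mathrm{d}x-\nu\zeta^{-j\kappa}y^{\nu-\kappa-1}\mathrm{d}y/h$. Since $h(x,\zeta^jy)$ is a unit generically on $\tilde C$, we may set $f=-\nu/h$, a function of $(x,\zeta^jy)$. This yields $\omega_j$, transverse to $\tilde C$.
\end{itemize}

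\textbf{Step 3: the discriminant (part 1).} Since $w=y^\nu$, we have $\mathrm{mult}(\Delta,C)=\frac1\nu\,\mathrm{ord}_y\prod_{i\neq j}(p_i-p_j)$. Using $\phi=y^\kappa h$, we get
\[
p_i-p_j=y^\kappa\bigl(\zeta^{i\kappa}h(x,\zeta^iy)-\zeta^{j\kappa}h(x,\zeta^jy)\bigr).
\]
Each of the $\nu(\nu-1)$ ordered pairs therefore contributes at least $\kappa$, so the total order is at least $\kappa\nu(\nu-1)$, which gives the bound $\kappa(\nu-1)$ after dividing by $\nu$.

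Equality holds iff every bracket is a unit, i.e. iff $\zeta^{i\kappa}\neq\zeta^{j\kappa}$ for all $i\neq j$, i.e. iff $\gcd(\nu,\kappa)=1$.

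For the converse, suppose $\gcd(\nu,\kappa)>1$. Then some pair with $\zeta^{(i-j)\kappa}=1$ has a bracket vanishing on $y=0$, so the inequality is strict.

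The case $\kappa=1$ is then immediate.

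\textbf{Main obstacle.} The delicate point is Step 1, namely justifying that irreducibility of the web forces a single Newton edge and that $\nu$ divides into the roots' denominators as claimed. We would argue that the monodromy around $C$ is the cyclic group of order $\nu$ permuting the roots transitively, so that all roots share the same valuation. That common valuation must equal $\kappa/\nu$ because the product of the roots is $\pm w^\kappa a_0$, whose $w$-order is $\kappa$.
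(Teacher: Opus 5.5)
Your proposal is correct and follows essentially the same route as the paper: you get $r=\kappa$ from the $w$-order of the product of the roots $\pm w^{\kappa}a_0$, you obtain part 2 by pulling back the Puiseux parametrization (\ref{equa:puiseux-kappa}) under $\pi$ and removing the common power of $y$, and you read the multiplicity from the leading coefficient $c_0^{\nu(\nu-1)}\prod_{i\neq j}(\zeta^{i\kappa}-\zeta^{j\kappa})$ of $\prod_{i\neq j}(p_i-p_j)$. The differences are cosmetic: you count $y$-orders upstairs rather than fractional $w$-orders, and your Newton polygon discussion is unnecessary, since the common leading exponent $r/\nu$ of all roots is already built into (\ref{equa:puiseux}).
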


\begin{proof}[\sl Proof]
From (\ref{equa:nu-forme-omega}) and (\ref{equa:puiseux}), we obtain
\begin{align*}
\prod\limits_{j=1}^{\nu}(\zeta^{jr}c_{0}(z)w^{\frac{r}{\nu}}+\ldots)=(-1)^{r(\nu+1)}w^{r}c_{0}(z)^{\nu}+\ldots=w^{\kappa}a_{0}(z,w).
\end{align*}
Since $a_0(z,0)\neq0,$ we deduce that $r=\kappa$. Thus we have
\begin{align*}
\Delta(\W_{\nu})
&=\prod\limits_{i\neq j}\Big((\zeta^{i\kappa}c_{0}(z)w^{\frac{\kappa}{\nu}}+\cdots)-(\zeta^{j\kappa}c_{0}(z)w^{\frac{\kappa}{\nu}}+\cdots)\Big)\\
&=w^{\kappa(\nu-1)}c_{0}(z)^{\nu(\nu-1)}\prod\limits_{i\neq j}(\zeta^{i\kappa}-\zeta^{j\kappa})+\cdots.
\end{align*}
It follows that $\mathrm{mult}\left(\Delta(\W_{\nu}),C\right)\geq\kappa(\nu-1),$ with equality if and only if $\zeta^{i\kappa}\neq\zeta^{j\kappa}$ for all $i\neq j$, that is, if~and~only~if $\gcd(\nu,\kappa)=1$, since $\gcd(\nu,\kappa)=\dfrac{\nu}{\#\{\zeta^{i\kappa}\hspace{1mm}\big\vert\hspace{1mm} i=1,\ldots,\nu\}}.$ Hence the first assertion holds.
\smallskip

\noindent The second assertion is obtained by applying $\pi$ to (\ref{equa:puiseux-kappa}).
\end{proof}

\smallskip

\noindent The following proposition corresponds to Theorem~\ref{theoreme:W-n-W-d-n} in the case where $\W_n=\W_{n}^{\rm{wk}}.$

\begin{pro}\label{pro:K(W)-K(W-nu)-holomorphe}
{\sl Let $\W_\nu$ be a $\nu$-web on $(\C^2,0)$, with $\nu\geq2$. Assume that $\Delta(\W_\nu)$ has an irreducible component $C$ weakly invariant by every irreducible subweb of $\W_\nu.$ Let $\W_{d-\nu}$ be a regular $(d-\nu)$-web on $(\C^2,0)$ transverse to $C.$ Set $\W=\W_{\nu}\boxtimes\W_{d-\nu}.$ Then $K(\W)-K(\W_\nu)$ is holomorphic along $C.$}
\end{pro}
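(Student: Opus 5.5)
We work near a generic point of $C$ and use local coordinates $(z,w)$ in which $C=\{w=0\}$. Since $K(\W)$ and $K(\W_\nu)$ are meromorphic $2$-forms with poles only on discriminants, and $\W_{d-\nu}$ is regular and transverse to $C$, it is enough to prove holomorphy of the difference at a generic point of $C$; Hartogs then extends it across the isolated points that remain. We also choose the coordinates so that every foliation $\G_k$ of $\W_{d-\nu}=\G_1\boxtimes\cdots\boxtimes\G_{d-\nu}$ is transverse to $C$.

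\textbf{Step 1: pass to a common ramified covering.} Let $\nu_\alpha$ be the degrees of the irreducible components of $\W_\nu$, let $N=\mathrm{lcm}(\nu_\alpha)$, and set $\pi(x,y)=(x,y^N)$. Applying Lemma~\ref{lem:multiplicite-Delta-W-nu-C-irreductible} (case $\kappa_\alpha<\nu_\alpha$) to each component, and composing with $y\mapsto y^{N/\nu_\alpha}$, we see that $\pi^*\W_\nu=\boxtimes_j\F_j$ with each $\F_j$ defined by $\omega_j=\mathrm{d}x+y^{m_j}g_j(x,y)\,\mathrm{d}y$ and $m_j\geq 1$. Here $m_j=N(1-\rho_\alpha)-1\geq N/\nu_\alpha-1\geq0$; the case $m_j=0$ is excluded because then $\F_j$ would be transverse to $\tilde C$ at $y=0$ with a tangent not of this shape, and $\tilde C$ is totally invariant. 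In particular every $\F_j$ is transverse to $\tilde C=\{y=0\}$ and tangent to $\mathrm{d}x$ there to order $\geq1$. The pull-backs $\pi^*\G_k$ are defined by $\mathrm{d}y-y^{N-1}h_k(x,y)\,\mathrm{d}x$ up to a unit; their slopes are of order $y^{N-1}$, so they are nearly $\{y=\text{const}\}$-type transversals. Curvature commutes with pull-back, so $\pi^*(K(\W)-K(\W_\nu))$ is the sum of $K(\F_i\boxtimes\F_j\boxtimes\G_k)$ and $K(\F_i\boxtimes\G_k\boxtimes\G_l)$, plus terms $K(\G\boxtimes\G\boxtimes\G)$, which are holomorphic. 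It is enough to show this sum is holomorphic along $y=0$, because holomorphy descends through the Galois-invariant pull-back: a Galois-invariant holomorphic $2$-form $\tilde\Omega$ that is a pull-back is of the form $\pi^*\Omega$ with $\Omega$ meromorphic, and the pole order of $\Omega$ along $C$ is controlled by that of $\tilde\Omega$.

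\textbf{Step 2: explicit formula for 3-webs.} For three foliations given by $\mathrm{d}y-p_i\,\mathrm{d}x$ in the chart $(x,y)$, the fundamental form has the standard expression: it is a rational expression in the $p_i$, their first derivatives, and the denominators $p_i-p_j$. For $\F_i$ we instead use $p_i=-1/(y^{m}g)$, or equivalently the chart with the roles of $x$ and $y$ exchanged. The terms of type $\F_i\boxtimes\G_k\boxtimes\G_l$ are then regular 3-webs near $y=0$, since the $\F$'s and $\G$'s are mutually transverse and $\G_k,\G_l$ are transverse to each other. Hence these terms are holomorphic.

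\textbf{Step 3: the main obstacle, terms $\F_i\boxtimes\F_j\boxtimes\G_k$.} These have poles along $y=0$ coming from the tangency of $\F_i$ and $\F_j$. The plan is to compute $\eta(\F_i\boxtimes\F_j\boxtimes\G_k)-\eta(\F_i\boxtimes\F_j\boxtimes\mathcal{H})$, where $\mathcal{H}=\{\mathrm{d}y=0\}$ is a reference foliation. Writing $\G_k$ as a perturbation of $\mathcal{H}$ of order $y^{N-1}$, the difference of fundamental forms has poles of order at most that of $1/(q_i-q_j)$ times $y^{N-1}$, where $q_i=y^{m_i}g_i$. I would try to show that the pole order $\mathrm{ord}_y(q_i-q_j)\le N-2$ is absorbed, using $m_j\le N-2$ for weakly invariant components, so that each difference is holomorphic.

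It then remains to handle $\sum_{i<j}K(\F_i\boxtimes\F_j\boxtimes\mathcal{H})$. Multiplied by $d-\nu$, this sum is the part of the expression that is independent of $k$, and it cannot be shown holomorphic directly. I expect to show instead that this sum equals $(d-\nu)$ times a quantity that is itself holomorphic. The alternative is to replace $\mathcal{H}$ by an arbitrary transversal and show that the result is independent of that choice modulo holomorphic forms; if the independence holds, then $K(\W)-K(\W_\nu)$ is holomorphic along $C$.

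The delicate point is the order count in the difference estimate, especially when $\rho_\alpha=\rho_\beta$ and the leading coefficients coincide. I would first test the argument on the case of an irreducible $\W_\nu$ with $\kappa=1$, in the manner of \cite[Proposition~2.6]{MP13}.
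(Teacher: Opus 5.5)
There is a genuine gap, and it starts with your normal form for $\pi^*\G_k$. A foliation transverse to $C=\{w=0\}$ is given by $\mathrm{d}z+g_k(z,w)\,\mathrm{d}w=0$. Its pull-back by $\pi(x,y)=(x,y^N)$ is $\mathrm{d}x+Ny^{N-1}g_k(x,y^N)\,\mathrm{d}y=0$. It is therefore tangent to $\{x=\mathrm{const}\}$ along $\tilde C$, exactly like the $\F_j$, and not ``nearly $\{y=\mathrm{const}\}$''; your form $\mathrm{d}y-y^{N-1}h_k\,\mathrm{d}x$ would make it tangent to $\tilde C$.

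Consequently the $\pi^*\G_k$ are pairwise tangent to order $N-1\geq1$ along $\tilde C$, and tangent to every $\F_j$ with $m_j\geq1$. So the claim in Step~2 that $\F_i\boxtimes\G_k\boxtimes\G_l$ is regular is false, and its curvature genuinely has a pole. Concretely, take $\W_\nu=\{\mathrm{d}w^2-w\,\mathrm{d}z^2=0\}$, $\G_k=\{\mathrm{d}z-a_k\,\mathrm{d}w=0\}$ with $a_1\neq a_2$, and $N=2$. Then $\pi^*\W_\nu=\F_+\boxtimes\F_-$ with $\F_\pm=\{\mathrm{d}x\pm2\,\mathrm{d}y=0\}$, and $\pi^*\G_k=\{\mathrm{d}x-2a_ky\,\mathrm{d}y=0\}$. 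A direct computation gives $\eta(\F_+\boxtimes\pi^*\G_1\boxtimes\pi^*\G_2)=\big(-\tfrac{1}{2y}+\cdots\big)\mathrm{d}x+(\text{holomorphic})\,\mathrm{d}y$, hence $K(\F_+\boxtimes\pi^*\G_1\boxtimes\pi^*\G_2)=\big(-\tfrac{1}{2y^2}+\cdots\big)\mathrm{d}x\wedge\mathrm{d}y$. This pole disappears only after adding the $\F_-$ term, which is its image under the deck involution $y\mapsto -y$. The same example has $m_j=0$, contradicting your claim $m_j\geq1$. Your justification for that claim is also wrong: in the weakly invariant case $\pi^*\W_\nu$ is transverse to $\tilde C$, not leaving it invariant.

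So holomorphy cannot be obtained term by term. Step~3 cannot be completed along these lines either: it remains a plan, it uses the wrong reference $\{\mathrm{d}y=0\}$, and it leaves $\sum_{i<j}K(\F_i\boxtimes\F_j\boxtimes\mathcal{H})$ unresolved. With the correct reference $\{\mathrm{d}x=0\}=\pi^*\{\mathrm{d}z=0\}$, that sum is just the case $d-\nu=1$ of the statement itself.

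The missing idea is averaging over the deck group. The sum of the mixed terms equals $\pi^*\big(K(\W)-K(\W_\nu)-K(\W_{d-\nu})\big)$, so it is invariant under the maps $\varphi_\ell(x,y)=(x,\zeta^\ell y)$, which permute the $3$-subwebs of each type. You may therefore replace each fundamental form $\eta$ by $\frac1N\sum_\ell\varphi_\ell^*\eta$. Every foliation upstairs has the shape $\mathrm{d}x+y^{n_i}h_i\,\mathrm{d}y$, so Lemma~\ref{lem:eta-W-3-n-lambda} gives $\eta=\big(g(x)y^{-n_0-1}+\cdots\big)\mathrm{d}x+\big(\tfrac{c}{y}+\cdots\big)\mathrm{d}y$ with $c\in\C$. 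Each mixed $3$-subweb contains some $\F_j$, so $n_0+1\leq N-\tilde\kappa\leq N-1$. Averaging annihilates every $g(x)y^{-m}\mathrm{d}x$ with $N\nmid m$, hence all polar terms of the $\mathrm{d}x$-coefficient, while $\mathrm{d}(c\,\mathrm{d}y/y)=0$. Each averaged form therefore has holomorphic differential. The pole-order bound $n_0+1<N$ and the constancy of the residue $c$ are exactly the inputs your perturbative estimate lacks.
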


\noindent The proof of this proposition relies on the following technical lemma, which will also be used to prove Proposition~\ref{pro:W-nu-non-degenere-le-long-C}.

\begin{lem}\label{lem:eta-W-3-n-lambda}
{\sl Let $n_1,n_2,n_3\in\N$ and consider the $3$-web $\W$ defined by the $1$-forms
\begin{align*}
\omega_i=\mathrm{d}x+y^{n_i}h_i(x,y)\mathrm{d}y,\qquad\quad i=1,2,3.
\end{align*}
For $i\neq j$, we set
\begin{align*}
h_{ij}=
\left\{
\begin{array}{lcl}
h_{i}                    & \text{if} & n_{i}<n_{j}, \\
h_{i}-h_{j}              & \text{if}  & n_{i}=n_{j},\\
\hphantom{h_{i}}-h_{j}   & \text{if}  & n_{i}>n_{j}.
\end{array}
\right.
\end{align*}
We assume that the following conditions are satisfied:
\begin{itemize}
\item [$\bullet$] $h_{1},h_{2},h_{23}$ and $h_{31}$ do not vanish along $y=0$;

\item [$\bullet$] if $n_1=n_2=n_3$ and $h_{12}(x,0)\equiv0$ ({\it i.e.} $h_1(x,0)=h_2(x,0)$), then there exists $\lambda\in\C\setminus\{1\}$ such that $h_3(x,0)=\lambda\,h_1(x,0).$
\end{itemize}
Then the fundamental form of $\W$ has the following form
\begin{align*}
\eta(\W)=\left(\frac{g(x)}{y^{n_0+1}}+\cdots\right)\mathrm{d}x+
\left(\frac{c}{y}+\cdots\right)\mathrm{d}y,
\end{align*}
where $n_0=\min(n_1,n_2,n_3)$, $g$ is a holomorphic function of $x$, $c\in\C$ and the dots denote higher order terms~in~$y.$
}
\end{lem}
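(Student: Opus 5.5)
The plan is to compute $\eta(\W)$ explicitly from a normalization of the $\omega_i$ and then track orders in $y$, splitting into cases according to the ordering of $n_1,n_2,n_3$ and whether $h_{12}(x,0)\equiv0$. Write $\omega_i=\mathrm{d}x+p_i\,\mathrm{d}y$ with $p_i=y^{n_i}h_i$. Set $\delta_1=p_3-p_2$, $\delta_2=p_1-p_3$, $\delta_3=p_2-p_1$. Then $\delta_1\omega_1+\delta_2\omega_2+\delta_3\omega_3=0$, so the forms $\delta_i\omega_i$ are admissible. Writing $\eta=A\,\mathrm{d}x+B\,\mathrm{d}y$, the condition $\mathrm{d}(\delta_i\omega_i)=\eta\wedge\delta_i\omega_i$ becomes
\[
Ap_i-B=R_i,\qquad R_i:=\partial_xp_i+p_i\,\partial_x\log\delta_i-\partial_y\log\delta_i ,
\]
which gives $A=\dfrac{R_1-R_2}{p_1-p_2}$ and $B=Ap_1-R_1$. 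By definition of $h_{ij}$ we have $p_i-p_j=y^{m_{ij}}\bigl(h_{ij}+O(y)\bigr)$ with $m_{ij}=\min(n_i,n_j)$. Since $h_{23}$ and $h_{31}$ do not vanish on $y=0$, near a generic point of $\{y=0\}$ we get $\delta_1=y^{m_{23}}\cdot$unit and $\delta_2=y^{m_{31}}\cdot$unit. Hence $R_i=-m_i/y+(\text{holomorphic})$, where $m_1=m_{23}$ and $m_2=m_{31}$.

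\textbf{Case $h_{12}(x,0)\not\equiv0$.} Here $p_1-p_2=y^{m_{12}}\cdot$unit, and there are two sub-cases.
\begin{itemize}
\item If $m_{12}=n_0$, then $R_1-R_2=(m_2-m_1)/y+O(1)$, so $A=O(y^{-n_0-1})$.
\item If $m_{12}>n_0$, then $n_3=n_0<n_1,n_2$ and $m_1=m_2=n_3$. In this sub-case I will use the finer expansion
\[
\frac{\delta_1}{\delta_2}=\frac{h_3-y^{n_2-n_3}h_2}{-h_3+y^{n_1-n_3}h_1}=-1+O\bigl(y^{m_{12}-n_3}\bigr),
\]
together with $\partial_xp_i=O(y^{n_i})$. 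These give $R_1-R_2=O(y^{m_{12}-n_3-1})$, hence again $A=O(y^{-n_0-1})$.
\end{itemize}
For $B$, note that $-R_1$ contributes $m_1/y$ plus holomorphic terms. The term $Ap_1$ is $O(y^{n_1-n_0-1})$, so it can only contribute to the $1/y$ coefficient when $n_1=n_0$. In that situation I will check that its $1/y$ coefficient is a constant. For instance, if $n_1<n_2$ then $A\sim (m_2-m_1)/(h_1y^{n_1+1})$ and $Ap_1\sim (m_2-m_1)/y$. If $n_1=n_2=n_3$ then $m_1=m_2$, so $A=O(y^{-n})$ and $Ap_1$ is holomorphic. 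Every configuration is handled this way.

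\textbf{Degenerate case: $n_1=n_2=n_3=n$ and $h_1(x,0)=h_2(x,0)$.} This is the main obstacle, because $p_1-p_2$ now vanishes to order $n+k$ with $k\ge1$. I will write $h_2=h_1+y^ke$ with $e(x,0)\not\equiv0$, and use the hypothesis $h_3(x,0)=\lambda h_1(x,0)$ with $\lambda\neq1$ to guarantee that $h_1-h_3$ is a unit generically on $y=0$. Then
\[
\frac{\delta_1}{\delta_2}=\frac{h_3-h_1-y^ke}{h_1-h_3}=-1-\frac{y^ke}{h_1-h_3}.
\]
It follows that $\partial_y\log(\delta_1/\delta_2)=O(y^{k-1})$ and $\partial_x\log(\delta_1/\delta_2)=O(y^k)$. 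Also $\partial_x(p_1-p_2)=O(y^{n+k})$ and $p_i=O(y^n)$. Therefore $R_1-R_2=O(y^{k-1})$ and $A=O(y^{-n-1})$, as required.

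The delicate point is the $1/y$ coefficient of $B$. From $-R_1$ it is $n$. From $Ap_1$, the leading terms give
\[
A\approx\frac{k}{(h_1-h_3)\,y^{n+1}}\quad(\text{up to a sign to be fixed}),\qquad Ap_1\approx\pm\frac{k\,h_1}{(h_1-h_3)\,y}=\pm\frac{k}{(1-\lambda)\,y}.
\]
This coefficient is constant precisely because $\lambda$ is a constant. This is where the hypothesis is used, and I expect the bookkeeping of signs and leading terms here to be the heart of the proof.

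Collecting the cases gives $\eta(\W)=\bigl(g(x)y^{-n_0-1}+\cdots\bigr)\mathrm{d}x+\bigl(c/y+\cdots\bigr)\mathrm{d}y$ with $c\in\C$, first near generic points of $\{y=0\}$. The form is then extended along $\{y=0\}$ using the fact that the expansions are valid in $\C\{x\}$-coefficients wherever the relevant leading functions are units.
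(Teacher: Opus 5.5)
Your set-up is the paper's: the normalization $\delta_1\omega_1+\delta_2\omega_2+\delta_3\omega_3=0$ is the one behind the formulas of \cite[Lemma~2.2]{MP13}, and solving $Ap_i-B=R_i$ through logarithmic derivatives is a tidier way to extract the same leading terms. Every case you actually carry out is correct. In the equal-exponent degenerate case the sign is $+$, i.e.\ $A=\frac{k}{(1-\lambda)h_1(x,0)}y^{-n-1}+\cdots$ and $c=n+\frac{k}{1-\lambda}$, as in the paper.

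The gap is in the case distinction. The hypotheses only require $h_1,h_2,h_{23},h_{31}$ to be non-vanishing on $\{y=0\}$, and the $\lambda$-condition only concerns $n_1=n_2=n_3$. Hence $h_{12}(x,0)\equiv0$ is allowed as soon as $n_1=n_2$, whatever $n_3$ is. You treat $h_{12}(x,0)\not\equiv0$ and then only $n_1=n_2=n_3$, so the configurations $n_1=n_2=n\neq n_3$ with $h_1(x,0)=h_2(x,0)$ are never handled. Your first-case argument does not cover them: there $m_1=m_2$, so the crude bound $R_1-R_2=O(1)$ against $p_1-p_2=-y^{n+k}e$ only gives $A=O(y^{-n-k})$. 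These configurations are not marginal. They are exactly the triples $\F_j\boxtimes\F_{j'}\boxtimes\G_l$ with $\zeta^{-j\kappa}=\zeta^{-j'\kappa}$ (possible precisely when $\gcd(\nu,\kappa)>1$) in the proof of Proposition~\ref{pro:K(W)-K(W-nu)-holomorphe}. They are also the triples $\F^\alpha_j\boxtimes\F^\alpha_{j'}\boxtimes\F^\gamma_{j''}$ with $\gcd(\nu_\alpha,\kappa_\alpha)=2$ and $\rho_\alpha\neq\rho_\gamma$ in the proof of Proposition~\ref{pro:W-nu-non-degenere-le-long-C}.

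Your own tools close the gap. If $n_3>n$, set $\tilde h_3=y^{n_3-n}h_3$. All exponents become $n$, $h_{23}$ and $h_{31}$ remain units on $\{y=0\}$, and $\tilde h_3(x,0)=0\cdot h_1(x,0)$. So your degenerate computation applies with $\lambda=0$ and gives $c=n+k$; this is the paper's reduction. If $n_3<n$, then
\[
\frac{\delta_1}{\delta_2}=-1-\frac{y^{\,n-n_3+k}\,e}{y^{\,n-n_3}h_1-h_3},
\]
so $R_1-R_2=(n-n_3+k)\frac{e(x,0)}{h_3(x,0)}\,y^{\,n-n_3+k-1}+\cdots$, since all other contributions are $O(y^{n+k})$. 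Hence $A=\frac{n_3-n-k}{h_3(x,0)}\,y^{-n_3-1}+\cdots$, $Ap_1=O(y^{\,n-n_3-1})$ is holomorphic, and $c=m_1=n_3$, in agreement with the paper.

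With these two cases added the proof is complete. Your closing remark about extending along $\{y=0\}$ can be dropped: the lemma, like its later uses, is only about expansions near generic points of $\{y=0\}$.
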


\begin{proof}[\sl Proof]
By arguing as in~\cite[Lemma~2.2]{MP13}, we obtain that $\eta(\W)=A(x,y)\mathrm{d}x+B(x,y)\mathrm{d}y,$ where
\begin{align*}
A=\frac{
\left|
\begin{matrix}
\partial_x(\delta_{12} h_3)y^{n_3}-\partial_y \delta_{12} & -\delta_{12}\\
\partial_x(\delta_{23} h_1)y^{n_1}-\partial_y \delta_{23} & -\delta_{23}
\end{matrix}
\right|}
{\delta},
&&
B=\frac{
\left|
\begin{matrix}
\delta_{12}y^{n_3}h_3 & \partial_x(\delta_{12} h_3)y^{n_3}-\partial_y\delta_{12}\\
\delta_{23}y^{n_1}h_1 & \partial_x(\delta_{23} h_1)y^{n_1}-\partial_y\delta_{23}
\end{matrix}
\right|
}{\delta},
\end{align*}
with $\delta_{ij}=y^{n_j}h_j-y^{n_i}h_i$\, and \,$\delta = \delta_{12} \delta_{23} \delta_{31}.$

\noindent When $h_{12}(x,0)\not\equiv0$, we have $y\not|\hspace{1.5mm}h_{12}h_{23}h_{31}$; if $n_1\leq n_2\leq n_3$, an argument from~\cite[Lemma~2.2]{MP13} shows that
\begin{align*}
A(x,y)=\frac{n_2-n_1}{h_{31}(x,0)}\frac{1}{y^{n_1+1}}+\cdots,
\qquad
B(x,y)=\frac{n_1}{y}+\cdots.
\end{align*}

\noindent Assume now that $h_{12}(x,0)\equiv0$. Then $n_1=n_2$ and $h_1(x,0)=h_2(x,0)$. Let us write $h_2(x,y)=h_1(x,y)+y^ku(x,y)$ with $k\geq1$ and $u(x,0)\not\equiv0.$ Consider the case $n_3\leq n_1=n_2$. A direct computation gives
\begin{Small}
\begin{align*}
&\delta(x,y)A(x,y)=(n_1+k-n_3)u(x,0)h_{31}(x,0)y^{n_1+k+n_3-1}+\cdots,&&
\delta(x,y)=-u(x,0)h_{31}(x,0)^2y^{n_1+k+2n_3}+\cdots,\\
&\delta(x,y)B(x,y)=u(x,0)h_{31}(x,0)\Big(kh_3(x,0)-(n_3+k)h_{31}(x,0)\Big)y^{n_1+k+2n_3-1}+\cdots.
\end{align*}
\end{Small}
\hspace{-1mm}It follows that
\begin{align*}
A(x,y)=\frac{n_3-n_1-k}{h_{31}(x,0)}\frac{1}{y^{n_3+1}}+\cdots,
\qquad
B(x,y)=\left(n_3+k-\frac{kh_3(x,0)}{h_{31}(x,0)}\right)\frac{1}{y}+\cdots.
\end{align*}

\noindent If $n_3<n_1=n_2$, then $A(x,y)=\frac{n_3-n_1-k}{h_{3}(x,0)}\frac{1}{y^{n_3+1}}+\cdots$\, and \,$B(x,y)=\frac{n_3}{y}+\cdots.$

\noindent If $n_1=n_2=n_3$, we have by hypothesis $h_3(x,0)=\lambda h_1(x,0)$ with $\lambda\neq1$, hence
\begin{align*}
A(x,y)=\frac{k}{(1-\lambda)h_1(x,0)}\frac{1}{y^{n_1+1}}+\cdots,
\qquad
B(x,y)=\left(n_1+\frac{k}{1-\lambda}\right)\frac{1}{y}+\cdots.
\end{align*}

\noindent The case $n_3>n_1=n_2$ reduces to the case $n_1=n_2=n_3$ with $\lambda=0$, noting that $\omega_3=\mathrm{d}x+y^{n_1}\tilde{h}_3(x,y)\mathrm{d}y$, where $\tilde{h}_3(x,y)=y^{n_3-n_1}h_3(x,y).$
\end{proof}

\begin{proof}[\sl Proof of Proposition~\ref{pro:K(W)-K(W-nu)-holomorphe}]
We will argue as in the proof of~\cite[Proposition~2.6]{MP13}. Consider again the local coordinate system $(U,(z,w))$ where $C\cap U=\{w=0\}.$ We can then write
\begin{align*}
\mathrm{T}\W_{d-\nu}|_{U}=\left\{\prod\limits_{l=1}^{d-\nu}(\mathrm{d}z+\tfrac{1}{\nu}g_{l}(z,w)\mathrm{d}w)=0\right\}.
\end{align*}
First assume that $\W_\nu$ is irreducible. By passing to the ramified covering $\pi(x,y)=(z,w)=(x,y^{\nu})$ and using the second assertion of Lemma~\ref{lem:multiplicite-Delta-W-nu-C-irreductible}, we obtain that
$\pi^{*}\W_{\nu}=\boxtimes_{j=1}^{\nu}\F_j$ and $\pi^{*}\W_{d-\nu}=\boxtimes_{l=1}^{d-\nu}\G_{l},$ where
\begin{align*}
\F_j\hspace{0.1mm}:\hspace{0.1mm}\mathrm{d}x+\zeta^{-j\kappa}y^{\nu-\kappa-1}f(x,\zeta^jy)\mathrm{d}y=0,&&
\G_{l}\hspace{0.1mm}:\hspace{0.1mm}\mathrm{d}x+y^{\nu-1}g_{l}(x,y^{\nu})\mathrm{d}y=0,
\end{align*}
with $1\leq\kappa<\nu$, $\zeta=\exp(\tfrac{2\mathrm{i}\pi}{\nu})$ and $f(x,0)\not\equiv0.$
\smallskip

\noindent We then have
\begin{align*}
K(\pi^*\W)-K(\pi^*\W_\nu)-K(\pi^*\W_{d-\nu})
&=\sum_{\substack{1\le j<j'\le\nu\\ 1\le l\le d-\nu}}K(\F_j\boxtimes\F_{j'}\boxtimes\G_l)+\sum_{\substack{1\le j\le\nu\\ 1\le l<l'\le d-\nu}}K(\F_j\boxtimes\G_l\boxtimes\G_{l'})\\
&=\sum_{\substack{1\le j<j'\le\nu\\ 1\le l\le d-\nu}}\mathrm{d}\eta_{jj'l}+\sum_{\substack{1\le j\le\nu\\ 1\le l<l'\le d-\nu}}\mathrm{d}\eta_{jll'}.
\end{align*}
where $\eta_{jj'l}:=\eta(\F_j\boxtimes\F_{j'}\boxtimes\G_l)$\,\, and\,\, $\eta_{jll'}:=\eta(\F_j\boxtimes\G_l\boxtimes\G_{l'}).$

\noindent Denoting by $\varphi_{\ell}(x,y)=(x,\zeta^{\ell}y)$, $\ell=1,\ldots,\nu,$ the \textsc{Deck} transformations of $\pi$, we deduce that
\begin{align*}
K(\pi^*\W)-K(\pi^*\W_\nu)-K(\pi^*\W_{d-\nu})
&=\pi^*\Big(K(\W)-K(\W_\nu)-K(\W_{d-\nu})\Big)\\
&=\frac{1}{\nu}\sum_{\ell=1}^{\nu}\varphi_\ell^*\pi^*\Big(K(\W)-K(\W_\nu)-K(\W_{d-\nu})\Big)\\
&=\sum_{\substack{1\le j<j'\le\nu\\1\le l\le d-\nu}}\mathrm{d}\left(\frac{1}{\nu}\sum_{\ell=1}^{\nu}\varphi_\ell^*\eta_{jj'l}\right)
+\sum_{\substack{1\le j\le\nu\\ 1\le l<l'\le d-\nu}}\mathrm{d}\left(\frac{1}{\nu}\sum_{\ell=1}^{\nu}\varphi_\ell^*\eta_{jll'}\right).
\end{align*}
\noindent Note that if $n\not\equiv0\mod\nu$ then $\frac{1}{\nu}\sum_{\ell=1}^{\nu}\varphi_{\ell}^*(y^{n}\mathrm{d}x)=0$, and if $n\equiv -1 \mod \nu$ then $\frac{1}{\nu}\sum_{\ell=1}^{\nu}\varphi_{\ell}^*(y^{n}\mathrm{d}y)=y^{n}\mathrm{d}y$. Moreover, if we write
\begin{align*}
\eta_{jj'l}=A_{jj'l}(x,y)\mathrm{d}x+B_{jj'l}(x,y)\mathrm{d}y
&&\text{and}&&
\eta_{jll'}=A_{jll'}(x,y)\mathrm{d}x+B_{jll'}(x,y)\mathrm{d}y,
\end{align*}
Lemma~\ref{lem:eta-W-3-n-lambda} shows that each of $A_{jj'l}$ and $A_{jll'}$ has poles of order $\leq\nu-\kappa\leq\nu-1$ along $y=0$, and that each of $B_{jj'l}$ and $B_{jll'}$ is logarithmic along $y=0$ with constant residue. It follows that $\mathrm{d}\left(\frac{1}{\nu}\sum_{\ell=1}^{\nu}\varphi_{\ell}^*\eta_{jj'l}\right)$ and $\mathrm{d}\left(\frac{1}{\nu}\sum_{\ell=1}^{\nu}\varphi_{\ell}^*\eta_{jll'}\right)$ are holomorphic along $\tilde{C}=\{y=0\}$, and hence so is $K(\pi^*\W)-K(\pi^*\W_\nu)-K(\pi^*\W_{d-\nu})$. Since~$\W_{d-\nu}$ is regular, $K(\W_{d-\nu})$ is holomorphic along $C$, and we deduce that $K(\W)-K(\W_{\nu})$ is also holomorphic along $C.$
\smallskip

\noindent Now assume that $\W_\nu=\boxtimes_{\alpha=1}^{r}\W_{\nu_{\alpha}}$ with $r\geq2$, each $\W_{\nu_{\alpha}}$ being an irreducible $\nu_{\alpha}$-web having $C$ as a weakly invariant curve. Consider the ramified covering $\pi\hspace{1mm}\colon(x,y)\mapsto(x,y^{\tilde{\nu}})$, where $\tilde{\nu}:=\prod_{\alpha=1}^{r}\nu_{\alpha}.$ The pull-back of $\W_{d-\nu}$ by $\pi$ writes as $\pi^{*}\W_{d-\nu}=\boxtimes_{l=1}^{d-\nu}\G_{l},$ where
\[
\G_{l}\hspace{0.1mm}:\hspace{0.1mm}\mathrm{d}x+y^{\tilde{\nu}-1}g_{l}(x,y^{\tilde{\nu}})\mathrm{d}y=0.
\]
As for $\pi^{*}\W_{\nu}$, we have $\pi=\pi_{\alpha}\circ\pi^{'}_{\alpha}$, where $\pi_{\alpha}\hspace{1mm}\colon(x,y)\mapsto(x,y^{\nu_{\alpha}})$ and $\pi^{'}_{\alpha}\hspace{1mm}\colon(x,y)\mapsto(x,y^{m_\alpha})$ with $m_\alpha:= \frac{\tilde{\nu}}{\nu_\alpha}$. By~Lemma~\ref{lem:multiplicite-Delta-W-nu-C-irreductible}, $\pi_\alpha^*\W_{\nu_{\alpha}}=\boxtimes_{j=1}^{\nu_{\alpha}}\F_{j,0}^{\alpha}$, where
\[
\F_{j,0}^{\alpha}\,:\,\mathrm{d}x+\zeta_{\alpha}^{-j\kappa_{\alpha}}y^{\nu_{\alpha}-\kappa_{\alpha}-1}f_\alpha(x,\zeta_{\alpha}^{j}y)\mathrm{d}y=0,
\]
with $1\leq\kappa_{\alpha}<\nu_\alpha$, $\zeta_{\alpha}=\exp(\tfrac{2\mathrm{i}\pi}{\nu_{\alpha}})$ and $f_\alpha(x,0)\not\equiv0$. It follows that $\pi^*\W_{\nu}=\boxtimes_{\alpha=1}^{r}\boxtimes_{j=1}^{\nu_{\alpha}}\F_{j}^{\alpha},$ where
\[
\F_{j}^{\alpha}\,:\,
\mathrm{d}x+m_{\alpha}\zeta_{\alpha}^{-j\kappa_{\alpha}}y^{\tilde{\nu}-\tilde{\kappa}_{\alpha}-1}f_{\alpha}(x,\zeta_{\alpha}^{j}y^{m_{\alpha}})\mathrm{d}y=0,
\]
with $\tilde{\kappa}_{\alpha}:=m_{\alpha}\kappa_{\alpha}$,\, $1\leq\tilde{\kappa}_{\alpha}<\tilde{\nu}$.

\noindent We are thus reduced to a situation analogous to the case where $\W_\nu$ was irreducible; the same argument shows that $K(\W)-K(\W_\nu)$ is holomorphic along $C$.
\end{proof}

\smallskip

\noindent In Section~\S\ref{sec:Preuves-resultats-principaux}, we will need the following lemma to establish Theorem~\ref{thm:W-n-fort}.

\begin{lem}\label{lem:K-W-n-W-d-n}
{\sl Let $\W_n=\F_1\boxtimes\cdots\boxtimes\F_n$ be a completely decomposable $n$-web having a totally invariant irreducible curve $C.$ Let $\W_{d-n}=\F^{'}_{1}\boxtimes\cdots\boxtimes\F^{'}_{d-n}$ be a completely decomposable $(d-n)$-web transverse~to~$C.$ Assume that the curvature of $\W_{d-n}$ is holomorphic along~$C.$ Then $K(\W)-K(\W_n)$ is holomorphic along $C.$ In particular, $K(\W)$ is holomorphic along $C$ if and only if $K(\W_n)$ is holomorphic along $C.$}
\end{lem}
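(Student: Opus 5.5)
We split the curvature over the $3$-subwebs, following the pattern of Proposition~\ref{pro:K(W)-K(W-nu)-holomorphe}. Set $\W=\W_n\boxtimes\W_{d-n}$. By definition,
\[
K(\W)-K(\W_n)-K(\W_{d-n})=\sum_{\substack{i<i'\\ l}}K(\F_i\boxtimes\F_{i'}\boxtimes\F'_l)+\sum_{\substack{i\\ l<l'}}K(\F_i\boxtimes\F'_l\boxtimes\F'_{l'}),
\]
where $1\le i,i'\le n$ and $1\le l,l'\le d-n$. Since $K(\W_{d-n})$ is holomorphic along $C$ by hypothesis, it is enough to show that each mixed $3$-web above has curvature holomorphic along $C$. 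No ramified covering is needed, since everything is already completely decomposable.

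We work near a generic point of $C$ in coordinates $(x,y)$ with $C=\{y=0\}$. Since each $\F_i$ leaves $C$ invariant, it is defined by $\mathrm{d}y+y^{m_i}a_i(x,y)\mathrm{d}x$ with $m_i\ge1$ and $a_i(x,0)\not\equiv0$ (or $a_i\equiv0$, which we treat the same way). Since each $\F'_l$ is transverse to $C$, it is defined by $\mathrm{d}x+b_l(x,y)\mathrm{d}y$. For the type $\F_i\boxtimes\F'_l\boxtimes\F'_{l'}$, the two transverse foliations $\F'_l,\F'_{l'}$ are generically transverse to each other along $C$, because $\W_{d-n}$ is a web whose discriminant does not contain $C$. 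Hence the three foliations are pairwise transverse at generic points of $C$, so this $3$-web is regular there and its curvature is holomorphic along $C$. For the type $\F_i\boxtimes\F_{i'}\boxtimes\F'_l$, only $\F_i,\F_{i'}$ are tangent along $C$. We first straighten $\F'_l$ to $\mathrm{d}x=0$ by choosing the coordinate $x$ as a first integral of $\F'_l$, which is possible since $\F'_l$ is regular and transverse to $C$. Then $\F_i\colon\mathrm{d}y+y^{m_i}a_i\mathrm{d}x$, and we normalize $\omega_1=y^{m_i}a_i\mathrm{d}x+\mathrm{d}y$, $\omega_2=-(y^{m_{i'}}a_{i'}\mathrm{d}x+\mathrm{d}y)$ and $\omega_3=\lambda\,\mathrm{d}x$ with $\lambda=y^{m_{i'}}a_{i'}-y^{m_i}a_i$, so that $\omega_1+\omega_2+\omega_3=0$. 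We then solve $\mathrm{d}\omega_k=\eta\wedge\omega_k$ explicitly.

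Solving with $\omega_3=\lambda\,\mathrm{d}x$ forces $\eta=\frac{\partial_y\lambda}{\lambda}\mathrm{d}y+A\,\mathrm{d}x$. From $\mathrm{d}\omega_1=\eta\wedge\omega_1$ we get $-\partial_y(y^{m_i}a_i)=A-\frac{\partial_y\lambda}{\lambda}y^{m_i}a_i$, which determines $A$. Since $\eta$ is determined up to closed logarithmic forms, only $\mathrm{d}\eta$ matters, and
\[
\mathrm{d}\eta=\Bigl(\partial_x\tfrac{\partial_y\lambda}{\lambda}-\partial_yA\Bigr)\mathrm{d}x\wedge\mathrm{d}y,
\]
where $\partial_yA$ involves $\partial_y\!\left(\frac{\partial_y\lambda}{\lambda}y^{m_i}a_i\right)$. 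Write $\lambda=y^{e}\mu(x,y)$ with $\mu(x,0)\not\equiv0$, generically, and $e\ge\min(m_i,m_{i'})$. Then $\frac{\partial_y\lambda}{\lambda}=\frac{e}{y}+\frac{\partial_y\mu}{\mu}$, which has a simple pole with constant residue, so $\partial_x$ of it is holomorphic. Also $\frac{\partial_y\lambda}{\lambda}y^{m_i}a_i$ is holomorphic since $m_i\ge1$. Hence $\mathrm{d}\eta$ is holomorphic along $C$.

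The main obstacle is the degenerate case $a_i(\cdot,0)$ versus $a_{i'}(\cdot,0)$ where $\lambda$ is divisible by a high power of $y$ with a non-generic cofactor. This is handled by the same factorization $\lambda=y^e\mu$, which holds at generic points of $C$ because $\F_i\ne\F_{i'}$ ensures $\lambda\not\equiv0$. One must also check the symmetry of the choice of which form is called $\omega_1$; this is consistent because $\eta$ is well defined. Summing over all mixed triples then gives the claim, and the ``in particular'' follows immediately.
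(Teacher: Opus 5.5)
\textbf{Gap in the triples $\F_i\boxtimes\F'_l\boxtimes\F'_{l'}$.} Your splitting of $K(\W)-K(\W_n)-K(\W_{d-n})$ into mixed $3$-subwebs is the paper's. Your explicit computation for the triples $\F_i\boxtimes\F_{i'}\boxtimes\F'_l$ is correct, and replaces the paper's appeal to \cite[Theorem~1]{MP13} by a direct check. The problem is the other type of triple. You claim that $\F'_l$ and $\F'_{l'}$ are generically transverse to each other along $C$ ``because $\W_{d-n}$ is a web whose discriminant does not contain $C$''. Nothing in the hypotheses gives this. ``Transverse to $C$'' only says that each $\F'_l$ is transverse to $C$, so $C\subset\Delta(\W_{d-n})$ is allowed.

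This is exactly the situation the lemma is designed for. If $C\not\subset\Delta(\W_{d-n})$, then $K(\W_{d-n})$ would automatically be holomorphic along $C$, and the lemma's hypothesis would be empty. In the proof of Theorem~\ref{thm:W-n-fort}, the lemma is applied on the cover to $\pi^*(\W_n^{\rm wk}\boxtimes\W_{d-n})$. There, already the pulled-back foliations $\mathrm{d}x+y^{\nu-1}g_l(x,y^{\nu})\mathrm{d}y$ coming from $\W_{d-n}$ are pairwise tangent along $\tilde C$, and in general so are those coming from $\W_n^{\rm wk}$. For such triples the $3$-web is not regular along $C$, and your argument says nothing about them.

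\textbf{How to close it.} The missing case yields to the same kind of computation with the roles exchanged. Near a generic point of $C$, straighten $\F_i$ (regular there, with $C$ as a leaf) to $\mathrm{d}y=0$. Write $\F'_l\colon\mathrm{d}x+b_l\mathrm{d}y$ and $\F'_{l'}\colon\mathrm{d}x+b_{l'}\mathrm{d}y$, with $b_{l'}-b_l\not\equiv0$. Take $\omega_1=(b_{l'}-b_l)\mathrm{d}y$, $\omega_2=\mathrm{d}x+b_l\mathrm{d}y$, $\omega_3=-(\mathrm{d}x+b_{l'}\mathrm{d}y)$. Then $\eta=A\,\mathrm{d}x+B\,\mathrm{d}y$ with $A=\partial_x(b_{l'}-b_l)/(b_{l'}-b_l)$ and $B=Ab_l-\partial_xb_l$. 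Writing $b_{l'}-b_l=y^{e}\mu$ with $\mu(x,0)\not\equiv0$ gives $A=\partial_x\mu/\mu$. So $\eta$ itself is holomorphic at generic points of $C$, and hence so is $K(\F_i\boxtimes\F'_l\boxtimes\F'_{l'})$.

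With this added, your proof is a self-contained alternative to the citation of \cite[Theorem~1]{MP13}. Without it, it only covers the case $C\not\subset\Delta(\W_{d-n})$, where the lemma's extra hypothesis is vacuous.
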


\begin{proof}[\sl Proof]
We have
\begin{align*}
K(\W)-K(\W_n)=K(\W_{d-n})
+\sum_{\substack{1\le i<j\le n\\ 1\le k\le d-n}}K(\F_i\boxtimes\F_j\boxtimes\F^{'}_{k})
+\sum_{\substack{1\le i\le n\\ 1\le k<k'\le d-n}}K(\F_i\boxtimes\F^{'}_{k}\boxtimes\F^{'}_{k'}).
\end{align*}
Now, $K(\F_i\boxtimes\F_j\boxtimes\F^{'}_{k})$ and $K(\F_i\boxtimes\F^{'}_{k}\boxtimes\F^{'}_{k'})$ are holomorphic along $C$ by applying \cite[Theorem~1]{MP13}.
The~lemma~then follows from the assumption that $K(\W_{d-n})$ is holomorphic along $C.$
\end{proof}

\section{Proofs of the main results}\label{sec:Preuves-resultats-principaux}

\begin{proof}[\sl Proof of Theorem~\ref{theoreme:W-n-W-d-n}]
In a neighborhood of a generic point of $C$, we can decompose $\W_n$ as $\W_n=\W_{\nu}\boxtimes\W_{\nu'}$, with $\W_{\nu}=\boxtimes_{\alpha=1}^{r}\W_{\nu_{\alpha}}$ and $\W_{\nu'}=\boxtimes_{\beta=1}^{s}\W_{\nu^{'}_{\beta}}$, where each $\W_{\nu_{\alpha}}$ (resp. $\W_{\nu^{'}_{\beta}}$) is an irreducible $\nu_{\alpha}$-web (resp. $\nu^{'}_{\beta}$-web) admitting $C$ as a strongly (resp. weakly) invariant curve. By choosing local coordinates $(z,w)$ such that $C=\{w=0\}$ and passing to the ramified covering $\pi\hspace{1mm}\colon(x,y)\mapsto(z,w)=(x,y^{\tilde{\nu}})$, where $\tilde{\nu}=\displaystyle\prod_{\alpha=1}^{r}\nu_{\alpha}\displaystyle\prod_{\beta=1}^{s}\nu^{'}_{\beta}$, we obtain that $\pi^*\W_\nu=\boxtimes_{i=1}^{\nu}\F_i$, $\pi^*\W_{\nu'}=\boxtimes_{j=1}^{\nu'}\G_j$ and $\pi^*\W_{d-n}=\boxtimes_{k=1}^{d-n}\mathcal{H}_{k}$, where the $\F_i$ are foliations having $\tilde{C}=\{y=0\}$ as an invariant curve, and the $\G_{j}$ and $\mathcal{H}_{k}$ are foliations transverse to $\tilde{C}.$
\smallskip

\noindent We then have
\begin{small}
\begin{align*}
K(\pi^*\W)-K(\pi^*\W_n)&=K\big(\pi^*(\W_{\nu'}\boxtimes\W_{d-n})\big)-K(\pi^*\W_{\nu'})
+\sum_{\substack{1\le i<i'\le\nu\\ 1\le k\le d-n}}K(\F_i \boxtimes\F_{i'}\boxtimes \mathcal{H}_k)
+\sum_{\substack{1\le i\le\nu\\ 1\le j\le\nu'\\ 1\le k\le d-n}}K(\F_i\boxtimes\G_j\boxtimes\mathcal{H}_k)\\
&\hspace{4mm}
+\sum_{\substack{1\le i\le\nu\\ 1\le k<k'\le d-n}}K(\F_i \boxtimes\mathcal{H}_k\boxtimes\mathcal{H}_{k'}).
\end{align*}
\end{small}
\hspace{-1mm}Note that, by \cite[Theorem~1]{MP13}, the curvatures $K(\F_i \boxtimes\F_{i'}\boxtimes \mathcal{H}_k)$, $K(\F_i\boxtimes\G_j\boxtimes\mathcal{H}_k)$ and $K(\F_i \boxtimes\mathcal{H}_k\boxtimes\mathcal{H}_{k'})$ are holomorphic along $\tilde{C}$. Moreover, $K\big(\pi^*(\W_{\nu'}\boxtimes\W_{d-n})\big)-K(\pi^*\W_{\nu'})=\pi^*\Big(K(\W_{\nu'}\boxtimes\W_{d-n})-K(\W_{\nu'})\Big)$ is holomorphic on $\tilde{C}$, thanks to Proposition~\ref{pro:K(W)-K(W-nu)-holomorphe}. It follows that $K(\pi^*\W)-K(\pi^*\W_n)$ is holomorphic along $\tilde{C}$, and therefore $K(\W)-K(\W_n)$ is holomorphic along $C.$
\end{proof}

\begin{proof}[\sl Proof of Proposition~\ref{pro:W-nu-non-degenere-le-long-C}]
We argue as in the proof of Proposition~\ref{pro:K(W)-K(W-nu)-holomorphe}, using the same notation. Writing $\W_\nu=\boxtimes_{\alpha=1}^{r}\W_{\nu_\alpha}$ and passing to the ramified covering $\pi\hspace{1mm}\colon(x,y)\mapsto(z,w)=(x,y^{\tilde{\nu}})$, where $\tilde{\nu}=\prod_{\alpha=1}^{r}\nu_\alpha$, we obtain that $\pi^*\W_{\nu}=\boxtimes_{\alpha=1}^{r}\boxtimes_{j=1}^{\nu_{\alpha}}\F_{j}^{\alpha},$ where each $\F_j^\alpha$ is a foliation transverse to $\tilde{C}=\{y=0\}$ and defined by
\[
\omega_j^\alpha=\mathrm{d}x+m_{\alpha}\zeta_{\alpha}^{-j\kappa_{\alpha}}y^{n_\alpha}f_{\alpha}(x,\zeta_{\alpha}^{j}y^{m_{\alpha}})\mathrm{d}y,
\]
with $n_\alpha=\tilde{\nu}-\tilde{\kappa}_\alpha-1$, $\tilde{\kappa}_\alpha=m_\alpha\kappa_\alpha=\tilde{\nu}\rho_\alpha$, $\rho_\alpha=\frac{\kappa_\alpha}{\nu_\alpha}<1$, $1\le\tilde{\kappa}_\alpha<\tilde{\nu}$\, and \,$f_\alpha(x,0)\not\equiv0.$
\smallskip

\noindent Let us denote $\eta^{\alpha\beta\gamma}_{jj'j''}=\eta(\F_j^\alpha\boxtimes\F_{j'}^\beta\boxtimes\F_{j''}^\gamma)$; we can write
\[
K(\pi^*\W_\nu)=\sum_{1\le\alpha\le\beta\le\gamma\le r} \sum_{\substack{1\le j\le\nu_\alpha\\ 1\le j'\le\nu_\beta\\ 1\le j''\le\nu_\gamma}} \mathrm{d} \eta^{\alpha\beta\gamma}_{jj'j''}.
\]

\noindent Setting $\varphi_{\ell}(x,y)=(x,\zeta^{\ell}y)$, $\ell=1,\ldots,\tilde{\nu},$ where $\zeta=\exp(\tfrac{2\mathrm{i}\pi}{\tilde{\nu}}),$ we have
\begin{align*}
K(\pi^*\W_\nu)
=\pi^*K(\W_\nu)
=\frac{1}{\tilde{\nu}}\sum_{\ell=1}^{\tilde{\nu}}\varphi_\ell^*\pi^*K(\W_\nu)
=\sum_{1\le\alpha\le\beta\le\gamma\le r}\sum_{\substack{1\le j\le\nu_\alpha\\1\le j'\le\nu_\beta\\1\le j''\le \nu_\gamma}}\mathrm{d}
\left(
\frac{1}{\tilde{\nu}}\sum_{\ell=1}^{\tilde{\nu}}\varphi_\ell^*\eta^{\alpha\beta\gamma}_{jj'j''}
\right).
\end{align*}

\noindent Using conditions ($\mathfrak{a}$), resp. ($\mathfrak{b}$), resp. ($\mathfrak{c}$), we easily verify that the $3$-webs $\F_j^\alpha\boxtimes\F_{j'}^\alpha\boxtimes\F_{j''}^\alpha$, resp. $\F_j^\alpha\boxtimes\F_{j'}^\alpha\boxtimes\F_{j''}^\gamma$ with $\alpha\neq\gamma$, resp. $\F_j^\alpha\boxtimes\F_{j'}^\beta\boxtimes\F_{j''}^\gamma$ with $\alpha<\beta<\gamma$, have the form of Lemma~\ref{lem:eta-W-3-n-lambda}, so that
\[
\eta_{jj'j''}^{\alpha\beta\gamma}=\left(\frac{g_{jj'j''}^{\alpha\beta\gamma}(x)}{y^{n_{\alpha\beta\gamma}+1}}+\cdots\right)\mathrm{d}x
+\left(\frac{c_{jj'j''}^{\alpha\beta\gamma}}{y}+\cdots\right)\mathrm{d}y,
\]
where $n_{\alpha\beta\gamma}=\min(n_\alpha,n_\beta,n_\gamma)$, $g_{jj'j''}^{\alpha\beta\gamma}$ is a holomorphic function of $x$ and $c_{jj'j''}^{\alpha\beta\gamma}\in\mathbb{C}.$ Since $n_{\alpha\beta\gamma}+1\le\tilde{\nu}-1$, we~deduce (\emph{cf.} proof
of Proposition~\ref{pro:K(W)-K(W-nu)-holomorphe}) that $\mathrm{d}\Big(\frac{1}{\tilde{\nu}}\sum_{\ell=1}^{\tilde{\nu}}\varphi_\ell^*\eta_{jj'j''}^{\alpha\beta\gamma}\Big)$ is holomorphic along $\tilde{C}=\{y=0\}$, and~hence so is $K(\pi^*\W_\nu)$. Consequently, $K(\W_\nu)$ is holomorphic along $C=\{w=0\}$.
\end{proof}

\begin{proof}[\sl Proof of Corollary~\ref{cor:W-nu-W-tr-reg}]
If $\nu=2$, \cite[Theorem~1]{MP13} ensures that the curvature of $\W$ is holomorphic along $C$.
\smallskip

\noindent Assume that $\nu\geq3.$ According to the first assertion of Lemma~\ref{lem:multiplicite-Delta-W-nu-C-irreductible}, we have
\[
\kappa(\nu-1)\leq\mathrm{mult}\left(\Delta(\W_{\nu}),C\right)<3(\nu-1),
\]
hence $\kappa<3$, and therefore $\kappa\in\{1,2\}\subset\{1,\ldots,\nu-1\}$. It follows that $C$ is weakly invariant by $\W_\nu$ and that $\gcd(\nu,\kappa)\leq2.$ The web $\W_\nu$ is then non-degenerate along $C.$ By Proposition~\ref{pro:W-nu-non-degenere-le-long-C}, $K(\W_\nu)$ is holomorphic on~$C$, and by Theorem~\ref{theoreme:W-n-W-d-n}, the same holds for $K(\W)$.
\end{proof}

\begin{proof}[\sl Proof of Corollary~\ref{cor:W-nu-W-tr-reg-nu-premier-4}]
By Lemma~\ref{lem:multiplicite-Delta-W-nu-C-irreductible}, we have
\[
\kappa(\nu-1)\leq\mathrm{mult}\left(\Delta(\W_{\nu}),C\right)<\nu(\nu-1),
\]
hence $\kappa<\nu$. The curve $C$ is then weakly invariant by $\W_\nu.$ Furthermore, the assumption on $\nu$ and the inequality $\kappa<\nu$ imply that $\gcd(\nu,\kappa)\leq2.$ It follows that $\W_\nu$ is non-degenerate along $C$, so that $K(\W_\nu)$ is holomorphic along $C$ by Proposition~\ref{pro:W-nu-non-degenere-le-long-C}, and so is $K(\W)$ by Theorem~\ref{theoreme:W-n-W-d-n}.
\end{proof}

\begin{proof}[\sl Proof of Theorem~\ref{thm:W-n-fort}]
Let us decompose $\W_{n}^{\rm{str}}$ and $\W_{n}^{\rm{wk}}$ into $\W_{n}^{\rm{str}}=\boxtimes_{\alpha=1}^{r}\W_{\nu_\alpha}$ and $\W_{n}^{\rm{wk}}=\boxtimes_{\beta=1}^{s}\W_{\nu^{'}_{\beta}}$, where each $\W_{\nu_{\alpha}}$ (resp. $\W_{\nu^{'}_{\beta}}$) is an irreducible $\nu_{\alpha}$-web (resp. $\nu^{'}_{\beta}$-web) having $C$ as a strongly (resp. weakly) invariant curve. We take local coordinates $(z,w)$ such that $C=\{w=0\}$. By passing to the ramified covering~$\pi\hspace{1mm}\colon(x,y)\mapsto(z,w)=(x,y^{\nu})$, where $\nu=\displaystyle\prod_{\alpha=1}^{r}\nu_{\alpha}\displaystyle\prod_{\beta=1}^{s}\nu^{'}_{\beta}$, we obtain that $\pi^*\W_{n}^{\rm{str}}=\boxtimes_{i=1}^{n_1}\F_i$ and $\pi^*(\W_{n}^{\rm{wk}}\boxtimes\W_{d-n})=\boxtimes_{j=1}^{d-n_1}\F'_j$, where the $\F_i$ are foliations having $\tilde{C}=\{y=0\}$ as an invariant curve, the $\F'_{j}$ are foliations transverse to $\tilde{C}$, and $n_1=\sum_{\alpha=1}^{r}\nu_\alpha.$

\noindent The web $\W_{n}^{\rm{wk}}$ being non-degenerate along $C$, Proposition~\ref{pro:W-nu-non-degenere-le-long-C} ensures that $K(\W_{n}^{\rm{wk}})$ is holomorphic on $C$. Since $\W_{d-n}$ is regular and transverse to $C$, Theorem~\ref{theoreme:W-n-W-d-n} implies that $K(\W_{n}^{\rm{wk}}\boxtimes\W_{d-n})$ is also holomorphic~on~$C.$ It follows that $K\big(\pi^*(\W_{n}^{\rm{wk}}\boxtimes\W_{d-n})\big)$ is holomorphic along $\tilde{C}.$ We can then apply Lemma~\ref{lem:K-W-n-W-d-n} to~the~web~$\pi^*\W=\pi^*\W_{n}^{\rm{str}}\boxtimes\pi^*(\W_{n}^{\rm{wk}}\boxtimes\W_{d-n})$ and deduce that $K(\pi^*\W)-K(\pi^*\W_{n}^{\rm{str}})$ is holomorphic on $\tilde{C}.$ Consequently, $K(\W)-K(\W_{n}^{\rm{str}})$ is holomorphic along $C$.
\end{proof}

\section{Examples}\label{sec:Exemples}

\noindent We give examples showing that, in Proposition~\ref{pro:W-nu-non-degenere-le-long-C}, the assumption that $\W_\nu$ is non-degenerate along $C$ is indispensable; the condition that $C$ is weakly invariant by the irreducible subwebs of $\W_\nu$ is not sufficient on its own to guarantee the holomorphy of $K(\W_\nu)$ along $C.$ More precisely, we will see that if one of the conditions ($\mathfrak{a}$), ($\mathfrak{b}$) or ($\mathfrak{c}$) is not satisfied, then $K(\W_\nu)$ is not necessarily holomorphic along $C.$

\begin{eg}\label{eg:condition-a}
Consider the $6$-web $\W$ on $(\C^2,0)$ defined in local coordinates $(z,w)$ by
\begin{align*}
\omega=\mathrm{d}w^6-3w\big(z^2+2w\big)\mathrm{d}z^2\mathrm{d}w^4+3w^2\big(z^4-2zw-2zw^2+3w^2\big)\mathrm{d}z^4\mathrm{d}w^2
-w^3\big(z^3-3zw+w+w^2\big)^2\mathrm{d}z^6.
\end{align*}
In a neighborhood of a generic point of $C=\{w=0\}$, the slopes $p_j$ $(j=1,\ldots,6)$ of $\mathrm{T}_{(z,w)}\W$ are given by
\begin{align*}
p_j=\zeta^{3j}zw^{\frac{1}{2}}+\zeta^{5j}w^{\frac{5}{6}}+\zeta^{j}w^{\frac{7}{6}},
\qquad \text{where}\hspace{1mm}\zeta=\exp(\tfrac{2\mathrm{i}\pi}{6}).
\end{align*}
In particular, $\W$ is irreducible and admits $C$ as a weakly invariant curve with \textsc{Puiseux} index $\kappa=3.$ Hence~$\gcd(\nu,\kappa)=3>2$, so condition ($\mathfrak{a}$) is not satisfied. Therefore $\W$ is degenerate along~$C.$
\smallskip

\noindent After passing to the degree $6$ cover $\pi\hspace{1mm}\colon(x,y)\mapsto(z,w)=(x,y^6),$ an explicit computation shows that
\[
K(\pi^*\W)=\left(-\frac{4}{y}+\cdots\right)\mathrm{d}x\wedge\mathrm{d}y.
\]
Thus $K(\pi^*\W)$ is not holomorphic along $\tilde{C}=\{y=0\}$, and consequently $K(\W)$ is not holomorphic along $C=\{w=0\}.$
\end{eg}

\begin{eg}\label{eg:condition-b}
Let $\W=\W_1\boxtimes\W_2$ be the $6$-web on $(\C^2,0),$ where $\W_1$, resp. $\W_2$, is the $2$-web, resp. the~$4$-web, defined by
\begin{align*}
\omega_1=\mathrm{d}w^2-(z+1)w\mathrm{d}z^2,&&
\text{resp.}\hspace{1.5mm}
\omega_2=\mathrm{d}w^4-2w\mathrm{d}w^2\mathrm{d}z^2-4w^2\mathrm{d}w\mathrm{d}z^3-(w-1)w^2\mathrm{d}z^4.
\end{align*}
Then the curve $C=\{w=0\}$ is totally invariant by each of the webs $\W_1$ and $\W_2$. According to~\cite[Lemma~2.5]{MP13}, $C$ has minimal multiplicity $\nu_1-1=1$ in $\Delta(\W_1)$; moreover, $\W_1$ is irreducible.

\noindent As for $\W_2$, in a neighborhood of a generic point of $C$, the slopes $p_j$ $(j=1,\ldots,4)$ of $\mathrm{T}_{(z,w)}\W_2$ can be~written~as
\begin{align*}
p_j=\zeta^{2j}w^{\frac{1}{2}}+\zeta^{3j}w^{\frac{3}{4}},
\qquad \text{where}\hspace{1mm}\zeta=\exp(\tfrac{2\mathrm{i}\pi}{4})=\mathrm{i}\,;
\end{align*}
it follows that $\W_2$ is also irreducible.
\smallskip

\noindent We observe that each of the webs $\W_1$ and $\W_2$ admits $C$ as a weakly invariant curve, with respective \textsc{Puiseux} indices $\kappa_1=1$ and $\kappa_2=2.$ Then condition $(\mathfrak{b})$ fails, since $\rho_1:=\frac{\kappa_1}{\nu_1}=\frac{1}{2}$ and $\rho_2:=\frac{\kappa_2}{\nu_2}=\frac{2}{4}=\rho_1$, but~$\gcd(\nu_2,\kappa_2)=2\neq1.$ As a result, $\W$ is degenerate along $C.$
\smallskip

\noindent Let us consider the degree $4$ cover $\pi\hspace{1mm}\colon(x,y)\mapsto(z,w)=(x,y^4)$. An explicit computation leads to
\[
K(\pi^*\W)=\left(-\frac{4}{x^2y}+\cdots\right)\mathrm{d}x\wedge\mathrm{d}y,
\]
so that $K(\pi^*\W)$ is not holomorphic on $\tilde{C}=\{y=0\}$, and hence $K(\W)$ is not holomorphic on $C=\{w=0\}.$
\end{eg}

\begin{eg}\label{eg:condition-c}
Consider the $6$-web $\W=\W_1\boxtimes\W_2\boxtimes\W_3$ on $(\C^2,0),$ where $\W_1$, $\W_2$ and $\W_3$ are the $2$-webs defined in local coordinates $(z,w)$ respectively by
\begin{align*}
\omega_1=\mathrm{d}w^2-w\mathrm{d}z^2,&&
\omega_2=\mathrm{d}w^2-(w+1)w\mathrm{d}z^2,&&
\omega_3=\mathrm{d}w^2-z^2w\mathrm{d}z^2.
\end{align*}
Then, for $\alpha=1,2,3$, the curve $C=\{w=0\}$ is totally invariant by $\W_\alpha$. Lemma~2.5~of~\cite{MP13} ensures that $C$ has minimal multiplicity $1$ in $\Delta(\W_\alpha)$; furthermore, each $\W_\alpha$ is irreducible.
\smallskip

\noindent We remark that $C$ is weakly invariant by each $\W_\alpha$, with \textsc{Puiseux} index $\mathfrak{i}(\W_\alpha,C)=1.$ Setting $\rho_\alpha=\rho(\W_\alpha,C)$, we have $\rho_1=\rho_2=\rho_3=\frac{1}{2}$. Hence condition ($\mathfrak{c}$) is not satisfied, and $\W$ is degenerate along $C.$
\smallskip

\noindent Pulling-back $\W$ by the double cover $\pi\hspace{1mm}\colon(x,y)\mapsto(z,w)=(x,y^2)$, an explicit computation gives
\[
K(\pi^*\W)=\left(-\frac{16\hspace{0.2mm}x}{(x^2-1)^2y}+\cdots\right)\mathrm{d}x\wedge\mathrm{d}y.
\]
Thus $K(\pi^*\W)$ is not holomorphic on $\tilde{C}=\{y=0\}$, and therefore $K(\W)$ is not holomorphic on $C=\{w=0\}.$
\end{eg}

%==================================================================================================================================================================

\end{document}